\newtheorem{theorem}{Theorem}[section]
\newtheorem{lemma}[theorem]{Lemma}
\newtheorem{proposition}[theorem]{Proposition}
\newtheorem{corollary}[theorem]{Corollary}
\theoremstyle{definition}
\newtheorem{example}[theorem]{Example}
\theoremstyle{definition}
\newtheorem{remark}[theorem]{Remark}
\newtheorem{remarks}[theorem]{Remarks}
\newtheorem{definition}[theorem]{Definition}
\newtheorem{thm}{Theorem}
\DeclareMathOperator{\Img}{Im}
\DeclareMathOperator{\id}{id}
\DeclareMathOperator{\mc}{MC}
\DeclareMathOperator{\MC}{MC_\bullet}
\DeclareMathOperator{\Map}{\mathsf{Map}}
\DeclareMathOperator{\GL}{G- {\mathcal L}_\infty-\mbox{alg} }
\DeclareMathOperator{\MCG}{\operatorname{MC}_\bullet^G}
\DeclareMathOperator{\GsSet}{\mathsf{G-sSet}}
\DeclareMathOperator{\sSet}{\mathsf{sSet}}
\DeclareMathOperator{\GCDGA}{\mathsf{G-CDGA}}
\renewcommand{\S}{{\mathsf S}}
\renewcommand{\P}{{\mathcal{P}}}
\newcommand{\D}{\mathcal{\D}}
\newcommand{\Z}{{\mathbb{Z}}}
\newcommand{\Q}{{\mathbb{Q}}}
\newcommand{\R}{{\mathbb{R}}}
\newcommand{\C}{{\mathbb{C}}}
\newcommand{\K}{{\mathbb{K}}}	
\newcommand{\Lie}{{\mathbb{L}}}
\newcommand{\Vect}{\mathsf{Vect}}
\newcommand{\Top}{\mathsf{Top}}
\newcommand{\coend}{\mathsf{CoEnd}}
\definecolor{red}{rgb}{1,0.1,0.1}
\definecolor{blue}{rgb}{0.1,0.1,1}
\begin{document}

\title{Iterated suspensions are coalgebras over the little disks operad}

\author{José M. Moreno-Fernández, Felix Wierstra\footnote{Both authors have been partially supported by a Postdoctoral Fellowship of the Max Planck Society. The first author has also been partially supported by the MINECO grant MTM2016-78647-P and by the Irish Research Council Postdoctoral Fellowship GOIPD/2019/823. The second author has also been partially supported by the Swedish Research Council grant number 2019-00536. \vskip 1pt \noindent 2010 Mathematics subject classification: 55P40, 18D50, 55P62, 55P91, 55U10. \vskip  1pt \noindent Key words and phrases: Suspensions, Little disks operad, Equivariant rational homotopy theory. }\nolinebreak\hspace{0pt}}

\date{}
\maketitle
\abstract{We study the Eckmann-Hilton dual of the little disks algebra structure on iterated loop spaces: With the right definitions, every $n$-fold suspension is a coalgebra over the little $n$-disks operad. This structure induces non-trivial cooperations on the rational homotopy groups of an $n$-fold suspension. We describe the Eckmann-Hilton dual of the Browder bracket, which is a cooperation that forms an obstruction for an $n$-fold suspension to be an $(n+1)$-fold suspension, i.e. if this cooperation is non-zero then the space is not an $(n+1)$-fold suspension. We prove several results in equivariant rational homotopy theory that play an essential role in our results.  Namely, we prove a version of the Sullivan conjecture for the Maurer-Cartan simplicial set of certain $L_\infty$-algebras equipped with a finite group action, and we provide rational models for fixed and homotopy fixed points of (mapping) spaces under some connectivity assumptions in the context of finite groups. We further show that by using the Eckmann-Hilton dual of the Browder operation we can use the rational homotopy groups to detect the difference between certain spaces that are rationally homotopy equivalent, but not homotopy equivalent.}


\section{Introduction} 

Since the invention of operads in the 70's, they have played an essential role in the study of iterated loop spaces. In \cite{May72}, May showed that every $n$-fold loop space has the structure of an algebra over the little $n$-disks operad, and his recognition principle implies that under certain assumptions the converse is also true. In other words, if a space is an algebra over the little $n$-disks operad, then it is weakly homotopy equivalent to an $n$-fold loop space. In \cite{Coh76} (and partially in \cite{May70}), it was further shown that this little $n$-disks algebra structure induces  homology operations, like the  Browder bracket and the Araki-Kudo and Dyer-Lashof operations, on the homology of $n$-fold loop spaces.\\

The goal of this paper is to study the Eckmann-Hilton dual of the fact that $n$-fold loop spaces are algebras over the little $n$-disks operad, and use the little disks operad to study iterated reduced suspensions. Since we only use reduced suspensions in this paper we will for simplicity just call them suspensions, and drop the word reduced (except in the main statements). It is well known that the pinch map turns the one-fold suspension of a space $X$ into a co-H-space, which is coassociative up to  homotopy. It then follows from the Eckmann-Hilton argument that for $n \geq 2$, an $n$-fold suspension is not just homotopy coassociative, but also homotopy cocommutative. This leads to the natural question whether this homotopy coassociativity and homotopy cocommutativity are encoded by the little $n$-disks operad, similar to how this operad encodes the homotopy associativity and homotopy commutativity of $n$-fold loop spaces. The main result of this paper asserts that this is indeed the case.\\

To show that $n$-fold suspensions are coalgebras over the little $n$-disks operad, we first need to define the notion of a coalgebra over an operad in pointed topological spaces. This structure, which to our knowledge has not appeared in this form in the literature before for topological spaces, is defined as follows. First, define the \emph{coendomorphism operad} of a pointed space $X$ as the operad $\coend \left(X\right)$ whose arity $r$-component is the space of based maps $\coend (X)(r):=\Map_{*}(X , X^{\vee r})$, where $\S_r$, the symmetric group on $r$ elements,  acts on $\coend \left(X\right)(r)$ by permuting the outputs. A \emph{coalgebra} over an operad $\P$ in pointed topological spaces is then defined as a morphism of  operads
\begin{equation*}
	\P \rightarrow \coend \left(X\right).
\end{equation*} 
The main result of this paper asserts that there is a natural action of the little $n$-disks operad on $n$-fold suspensions (see Theorem \ref{Main}).

\begin{thm}\label{A} \em
	Let $\Sigma^n X$ be the $n$-fold reduced suspension of a pointed space $X$, and denote by $\mathcal{D}_n$ the little $n$-disks operad. There is a natural map of operads
	\begin{equation*}
	\nabla: \mathcal{D}_n \rightarrow \coend\left(\Sigma^n X\right)
	\end{equation*} which encodes the homotopy coassociativity and homotopy cocommutativity of the pinch map. That is, $n$-fold reduced suspensions are coalgebras over the little $n$-disks operad.
\end{thm}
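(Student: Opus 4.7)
The plan is to build $\nabla$ by dualizing May's construction of the $\mathcal{D}_n$-action on $\Omega^n Y$. Model the $n$-fold suspension as $\Sigma^n X = (D^n \times X)/(\partial D^n \times X \cup D^n \times \{*\})$, and write $[t,x]$ for the class of $(t,x)$. An element $c = (c_1,\dots,c_r) \in \mathcal{D}_n(r)$ consists of $r$ affine embeddings $c_i : D^n \hookrightarrow D^n$ with pairwise disjoint interiors. Writing $\iota_i$ for the inclusion of the $i$-th wedge factor in $(\Sigma^n X)^{\vee r}$, I define
\begin{equation*}
\nabla(c)[t,x] = \begin{cases} \iota_i[c_i^{-1}(t), x] & \text{if } t \in c_i(D^n), \\ * & \text{otherwise.} \end{cases}
\end{equation*}
This is well defined and continuous: on $\partial c_i(D^n)$ we have $c_i^{-1}(t) \in \partial D^n$, so $\iota_i[c_i^{-1}(t), x] = *$ matches the value taken outside the union $\bigcup_i c_i(D^n)$. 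The map is based since $[t,*]$ already collapses to $*$ in $\Sigma^n X$.

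Then I need to verify that $\nabla$ is a morphism of topological operads $\mathcal{D}_n \to \coend(\Sigma^n X)$. Continuity of $c \mapsto \nabla(c)$ into $\Map_*(\Sigma^n X, (\Sigma^n X)^{\vee r})$ with the compact-open topology follows from the continuous dependence of the embeddings $c_i$ (and their inverses on their images) on $c$. Symmetric group equivariance is built into the formula, since permuting the ordering of the $c_i$ reindexes the wedge summands accordingly. The unit axiom holds because $\nabla(\id_{D^n})[t,x] = [t,x]$. The key compatibility is operadic composition: for $c \in \mathcal{D}_n(r)$, $c' \in \mathcal{D}_n(s)$ and $1 \leq k \leq r$, the operadic composition $c \circ_k c'$ replaces $c_k$ by the $s$ disks $c_k \circ c'_1, \dots, c_k \circ c'_s$. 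A direct case analysis, using $(c_k \circ c'_j)^{-1}(t) = (c'_j)^{-1}(c_k^{-1}(t))$ and the identification of the $j$-th sub-summand of the $k$-th wedge factor with the $(k-1+j)$-th summand of $(\Sigma^n X)^{\vee(r+s-1)}$, shows that $\nabla(c \circ_k c')$ coincides with the coendomorphism composition $\nabla(c) \circ_k \nabla(c')$.

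Naturality in $X$ is immediate: for any based map $f : X \to Y$, the induced map $\Sigma^n f : \Sigma^n X \to \Sigma^n Y$ acts only on the $x$-coordinate and therefore commutes with every $\nabla(c)$. The main obstacle I anticipate is the operadic-composition check, which while conceptually transparent requires careful bookkeeping of the relabeling between the $r+s-1$ disks of $c \circ_k c'$ and the $r+s-1$ wedge summands obtained by first applying $\nabla(c)$ and then expanding the $k$-th summand through $\nabla(c')$. Everything else is a routine dualization of the classical little-disks action on iterated loop spaces.
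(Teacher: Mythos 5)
Your construction is correct and is essentially the paper's own proof: the paper defines exactly this collapse-and-rescale map first on $S^n \cong D^n/\partial D^n$ (Proposition \ref{Spheres}) and then extends it to $\Sigma^n X = S^n \wedge X$ via distributivity of smash over wedge, which unwinds to precisely your formula for $\nabla(c)[t,x]$. The verifications you outline (continuity, equivariance, unit, operadic composition, naturality in $X$) are the same routine checks the paper explicitly leaves to the reader.
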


Although Theorem \ref{A} might not be very surprising to experts, and some of the key ideas are already present in May's "The geometry of iterated loop spaces" (\cite{May72}), we have not been able to find a precise statement connecting the homotopy cocommutativity of suspensions to the little disks operad in the literature. The existence of such a coalgebra structure on $n$-fold suspensions has important consequences. As explained in Section \ref{Suspensions are coalgebras}, the map $\nabla$ induces a sequence of maps
\begin{equation}\label{Maps}
	\Delta_r:\Sigma^nX \rightarrow \Map\left(\mathcal{D}_n(r), \left(\Sigma^n X\right)^{\vee r}\right)^{\S_r},
\end{equation} 
where the target space is the subspace of $\S_r$-invariants with respect to the symmetric group action by conjugation of the unbased mapping space from $\mathcal{D}_n(r)$ to $(\Sigma^nX)^{\vee r}$. Like in the Eckmann-Hilton dual situation, one expects that the maps in (\ref{Maps}) induce meaningful operations in homotopy and/or (co)homology. In a first approach to the study of these operations,  we investigate the rational case. Since the homology of suspensions of most spaces we are interested in is often rather small, we have not been able to find any examples where this little $n$-disks coalgebra structure induces  interesting cooperations in homology. Nonetheless, we do construct certain cooperations on the rational homotopy groups of an $n$-fold suspension which contain interesting information about the space. In particular, we construct a coproduct on the rational homotopy groups of suspensions, producing the following homotopy cooperation (see Theorem \ref{HomotopyBrowder}).

\begin{thm}\label{B} \em
Let $\Sigma^n X$ be the $n$-fold reduced suspension of a rational space, and denote by $*$ the free product of graded Lie algebras. Then the binary part of the $\mathcal{D}_n$-coalgebra structure induces a degree $(n-1)$ cooperation
\begin{equation*}
\kappa:\pi_*\left(\Sigma^n X\right) \rightarrow \left( \pi_*\left(\Sigma^n X\right)\ *\ \pi_*\left(\Sigma^n X\right) \right)_{*+n-1}	
\end{equation*} called the \emph{rational homotopy Browder cooperation}. If moreover $\Sigma^nX$ is an $(n+1)$-fold suspension, then the cooperation $\kappa$ vanishes.
\end{thm}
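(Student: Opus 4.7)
The plan is to extract $\kappa$ from the arity-$2$ component of $\nabla$, using the $\S_2$-equivariant equivalence $\mathcal{D}_n(2)\simeq S^{n-1}$, and then to argue that when $\Sigma^n X$ is an $(n+1)$-fold suspension this component factors through $\mathcal{D}_{n+1}(2)\simeq S^n$, whose homology in the relevant degree vanishes. Concretely, I would start from
$$\Delta_2\colon \Sigma^n X \longrightarrow \Map\bigl(\mathcal{D}_n(2), (\Sigma^n X)^{\vee 2}\bigr)^{\S_2}$$
from equation~(\ref{Maps}) and repackage it, via the equivalence $\mathcal{D}_n(2)\simeq S^{n-1}$ (antipodal action) and adjunction, as an $\S_2$-equivariant map with $S^{n-1}\wedge \Sigma^n X$ as source and $(\Sigma^n X)^{\vee 2}$ as target. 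Precomposing with a representative $\alpha\colon S^k \to \Sigma^n X$ then produces a class in $\pi_{k+n-1}\bigl((\Sigma^n X)^{\vee 2}\bigr)$. Rationally, the Hilton--Milnor theorem identifies
$$\pi_\ast\bigl((\Sigma^n X)^{\vee 2}\bigr)\otimes \Q \;\cong\; \bigl(\pi_\ast(\Sigma^n X)\otimes \Q\bigr)\;\ast\; \bigl(\pi_\ast(\Sigma^n X)\otimes \Q\bigr),$$
and $\kappa(\alpha)$ is defined as the image of the above class in this free product of graded Lie algebras, restricted to the isotypic summand carried by the fundamental class of $\mathcal{D}_n(2)\simeq S^{n-1}$ --- this is precisely what separates the Browder-type cooperation from the trivial contribution coming from the pinch map.

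To prove the vanishing when $\Sigma^n X \simeq \Sigma^{n+1} Y$, I would apply Theorem~\ref{A} to $\Sigma^{n+1}Y$ to obtain a $\mathcal{D}_{n+1}$-coalgebra structure $\nabla'$. Naturality of the construction with respect to the standard operad inclusion $j\colon \mathcal{D}_n \hookrightarrow \mathcal{D}_{n+1}$ provides a factorisation $\nabla = \nabla' \circ j$, up to homotopies that do not affect the rational invariant $\kappa$. In arity two, $j_2$ is $\S_2$-equivariantly the equatorial inclusion $S^{n-1}\hookrightarrow S^n$, and by construction the extraction of $\kappa(\alpha)$ amounts to pairing the binary structure map against a generator of the top (sign-twisted) rational homology of $\mathcal{D}_n(2)$. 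After factorisation through $\mathcal{D}_{n+1}(2)\simeq S^n$ this pairing is controlled by $H_{n-1}(S^n;\Q)$, which vanishes for $n\ge 1$, forcing $\kappa=0$.

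The principal difficulty will be controlling the $\S_2$-equivariance coherently through all these steps. One must ensure that the Hilton--Milnor identification respects the swap action, so that the isotypic component used to define $\kappa$ is intrinsic; and the factorisation $\nabla = \nabla' \circ j$ in the presence of an $(n+1)$-fold delooping must be made rigorous at the level of equivariant rational models. This is precisely where the equivariant rational homotopy theory developed earlier in the paper --- the equivariant Sullivan-type statement for $L_\infty$-algebras with finite group action, together with the rational models for fixed and homotopy fixed points --- is needed.
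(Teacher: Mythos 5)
Your vanishing argument is essentially the paper's (push the binary structure map forward along $\mathcal{D}_n(2)\hookrightarrow\mathcal{D}_{n+1}(2)\simeq S^n$ and observe that the fundamental class of $S^{n-1}$ dies), but the construction of $\kappa$ itself has a concrete false step. The binary coaction $\Delta_2:\mathcal{D}_n(2)\times\Sigma^nX\to(\Sigma^nX)^{\vee 2}$ does \emph{not} factor through the smash $S^{n-1}\wedge\Sigma^nX$: its restriction to $\{d\}\times\Sigma^nX$ for any single configuration $d$ is the pinch map, which is essential, so no adjunction can repackage $\Delta_2$ as a map out of $S^{n-1}\wedge\Sigma^nX$. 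Consequently your recipe ``compose with $\alpha:S^k\to\Sigma^nX$, apply Hilton--Milnor, and restrict to the isotypic summand carried by the fundamental class'' is not actually defined: the rational Hilton--Milnor decomposition of $\pi_*\bigl((\Sigma^nX)^{\vee 2}\bigr)$ makes no reference to $\mathcal{D}_n(2)$, so there is no canonical ``summand carried by the fundamental class'' in which to read off $\kappa(\alpha)$. Separating the degree-$(n-1)$ Browder part from the pinch contribution is exactly the content that needs a mechanism, and you have not supplied one.

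The paper's mechanism is different and is where its equivariant machinery does real work, not merely bookkeeping of $\S_2$-equivariance as your last paragraph suggests: one passes from the fixed points to the homotopy fixed points of $\Map\bigl(\mathcal{D}_n(2),(\Sigma^nX)^{\vee 2}\bigr)$, checks connectivity (here $\mathcal{D}_n(2)\simeq S^{n-1}$ is $(n-1)$-dimensional while $\Sigma^nX$ is $n$-connected), and uses Theorem \ref{Models of homotopy fixed points} (which rests on Theorem \ref{HomEq}) together with formality of $\mathcal{D}_n$ and coformality of wedges of suspensions to compute
\begin{equation*}
\pi_*\Bigl(\Map\bigl(\mathcal{D}_n(2),(\Sigma^nX)^{\vee 2}\bigr)^{h\S_2}\Bigr)\cong\Bigl(H^*\bigl(\mathcal{D}_n(2)\bigr)\otimes\bigl(\pi_*(\Sigma^nX)\bigr)^{*2}\Bigr)^{\S_2},
\end{equation*}
as in equation (\ref{Homotopy groups}); pairing against the fundamental homology class then \emph{defines} $\kappa$, and the pinch part corresponds to the degree-zero class. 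If you want to avoid that machinery, the fix is not adjunction to the smash but a relative/top-cell argument: restrict $\Delta_2\circ(\mathrm{id}\times\alpha)$ to the pair $\bigl(S^{n-1}\times S^k,\,S^{n-1}\vee S^k\bigr)$, note the restriction to the wedge is the pinch on the $S^k$-factor, and extract the component on the top cell $S^{n+k-1}$ (this is what the paper's cellular description in Example \ref{Cellular description} implements for spheres); as written, your proposal skips this and so does not yet define the cooperation whose vanishing you then argue.
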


The cooperation above forms an obstruction for an $n$-fold suspension to be an $(n+1)$-fold suspension, which is Eckmann-Hilton dual to the classical Browder bracket. We cannot currently hope to understand the homotopy cooperations beyond the rational case, since this would require us to know the homotopy groups of (homotopy) fixed points of mapping spaces, which are extremely hard to compute and are pretty unknown. This is in high contrast with the Eckmann-Hilton dual case of iterated loop spaces, for which a lot is known about their (co)homology. \\

To study the operations induced on rational homotopy groups, we need to prove several results in (discrete) equivariant rational homotopy theory. First, we need to compute the homotopy groups of mapping spaces of the form $\Map\left(\mathcal{D}_n(r),X^{\vee r}\right)^{\S_r}$. As previously mentioned, this is extremely hard in general. Fortunately, when assuming that $X$ is rational and studying the homotopy fixed points instead of the ordinary fixed points in the above mapping space, we are able to compute the rational homotopy groups in certain cases. To do so, we extend and study the Maurer-Cartan simplicial set functor $\MC$ to $G$-simplicial sets. An important result that we obtain and which might be of independent interest is the following (see Theorem \ref{HomEq}). 
\begin{thm}\label{C} \em
	Let $G$ be a finite group and let $L=L_{\geq 0}$ be a complete $L_\infty$-algebra over $\Q$ with a $G$-action. Then the natural inclusion 
	\begin{equation*}
	\MC \left(L\right)^G \hookrightarrow \MC\left(L\right)^{hG}
	\end{equation*} is a homotopy equivalence of Kan complexes. 
\end{thm}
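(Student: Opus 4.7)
The plan is to realize $\MC(L)^{hG}$ itself as a Maurer--Cartan simplicial set associated to a $G$-invariant $L_\infty$-subalgebra, so that the inclusion in question becomes induced by a quasi-isomorphism of $L_\infty$-algebras, to which standard homotopy invariance results apply.

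First I would identify the strict fixed points. Since the $G$-action on $\MC(L)$ comes by functoriality from the $G$-action on $L$, a Maurer--Cartan element is $G$-invariant precisely when it lies in $L^G$, yielding
\[
	\MC(L)^G \;=\; \MC(L^G),
\]
where $L^G$ is again a complete, non-negatively graded $L_\infty$-algebra. For the homotopy fixed points, I would extend the cotensoring of $\MC_\bullet$ over simplicial sets (via Sullivan's polynomial de Rham functor $A_{PL}$) to the equivariant setting, and produce a natural identification
\[
	\MC(L)^{hG} \;\cong\; \MC\bigl((L \mathbin{\hat{\otimes}} A_{PL}(EG))^G\bigr),
\]
where $EG$ is a contractible free $G$-simplicial set and the tensor product is completed with respect to the filtration on $L$. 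Setting up this equivariant cotensor identification carefully is the main technical step.

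Once this is in place, the proof reduces to a clean chain of arguments. Since $EG$ is contractible, the augmentation $A_{PL}(EG) \to \Q$ is a $G$-equivariant quasi-isomorphism of CDGAs, so tensoring with $L$ gives a $G$-equivariant quasi-isomorphism of complete $L_\infty$-algebras $L \mathbin{\hat{\otimes}} A_{PL}(EG) \to L$. Because $|G|$ is invertible in $\Q$, Maschke's theorem makes the functor of $G$-invariants exact, so this quasi-isomorphism descends to a quasi-isomorphism
\[
	(L \mathbin{\hat{\otimes}} A_{PL}(EG))^G \;\xrightarrow{\;\simeq\;}\; L^G.
\]
Applying the homotopy invariance of $\MC_\bullet$ on complete, non-negatively graded $L_\infty$-algebras (Getzler, Berglund) yields a homotopy equivalence of Kan complexes $\MC((L \mathbin{\hat{\otimes}} A_{PL}(EG))^G) \xrightarrow{\simeq} \MC(L^G)$. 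Combined with the two identifications above, and the fact that the unit $\Q \hookrightarrow A_{PL}(EG)$ is a $G$-equivariant section of the augmentation inducing precisely the natural inclusion $\MC(L)^G \hookrightarrow \MC(L)^{hG}$, this gives the desired homotopy equivalence.

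The hard part will be the equivariant cotensor formula $\Map_G(K, \MC(L)) \cong \MC((L \mathbin{\hat{\otimes}} A_{PL}(K))^G)$, which must be set up naturally in $K$ and must handle the infinite simplicial set $EG$ (for instance, as a filtered colimit over its finite $G$-simplicial subsets) while preserving the completeness hypothesis required to invoke homotopy invariance. Once this identification is in hand, the remainder of the proof is a soft combination of three standard facts: contractibility of $EG$, semisimplicity of $\Q[G]$, and homotopy invariance of the Maurer--Cartan functor on complete $L_\infty$-algebras.
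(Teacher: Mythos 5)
Your strategy---reducing everything to homotopy invariance of $\MC$ applied to the augmentation $A_{PL}(EG)\to\Q$---is genuinely different from the paper's proof (which treats the abelian case by an explicit simplicial averaging homotopy and then inducts up the tower $L/F^nL$ of principal fibrations, comparing long exact sequences via the five lemma), but it has a real gap at exactly the step you defer. The cotensor formula $\Map(K,\MC(L))\cong\MC\left(L\widehat{\otimes}A_{PL}(K)\right)$ holds as an \emph{isomorphism} only for finite $K$, where $A_{PL}(K)$ is a finite limit and $L\widehat{\otimes}(-)$ commutes with it; for an infinite complex such as $EG$ the natural map $\MC\left(L\widehat{\otimes}A_{PL}(EG)\right)\to\Map\left(EG,\MC(L)\right)$ is in general not an isomorphism (already for $L$ abelian and infinite-dimensional, a compatible family of $L$-valued forms on the simplices of $EG$ need not come from a finite, or filtration-convergent, sum of elementary tensors). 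You cannot retreat to knowing this map only as a $G$-equivariant weak equivalence, because your argument then requires taking strict $G$-fixed points of both sides---which is precisely the fixed-points-versus-homotopy-fixed-points comparison the theorem is supposed to establish, i.e.\ a circularity. Your proposed repair, exhausting $EG$ by finite $G$-subcomplexes $K_i$, replaces $L\widehat{\otimes}A_{PL}(EG)$ by the further completed algebra $\varprojlim_i\left(L\widehat{\otimes}A_{PL}(K_i)\right)$, whose homology and filtration are not those of $L\widehat{\otimes}A_{PL}(EG)$ on the nose: the $K_i$ are not contractible, so identifying the homotopy type of this limit requires a Mittag-Leffler/$\varprojlim^1$ analysis (and a check that the relevant filtration is still complete with the right associated graded), none of which is carried out. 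This is the actual content of the theorem in your approach, and it is missing.

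A second, repairable but unstated point: the homotopy-invariance input must be the \emph{filtered} version. A bare quasi-isomorphism of complete $L_\infty$-algebras does not in general induce an equivalence on $\MC$, since completion does not preserve quasi-isomorphisms; the Getzler/Dolgushev--Rogers type statements require the map to be compatible with the filtrations and to induce quasi-isomorphisms on the filtration quotients. Your augmentation $L\widehat{\otimes}A_{PL}(EG)\to L$ is filtration-compatible, and on each $\mathrm{gr}^nL$ tensoring an acyclic-augmented CDGA over $\Q$ is a quasi-isomorphism, so this hypothesis can be verified---and, by exactness of $(-)^G$ over $\Q$ (Maschke), it passes to invariants---but it must be stated and then re-verified for whatever completed replacement of $L\widehat{\otimes}A_{PL}(EG)$ survives the repair of the first gap. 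As written, the chain ``contractibility of $EG$ + Maschke + homotopy invariance'' is sound only after the equivariant cotensor identification and the filtered hypotheses are actually established, so the proposal is incomplete.
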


Here, $\MC(L)$ is the Maurer-Cartan simplicial set, and $(-)^G$ (resp. $(-)^{hG}$) denotes  the fixed points (resp. homotopy fixed points). The result above is a particular case of the so-called \emph{Sullivan conjecture}, and it extends and complements some of the main results of Goyo's thesis (\cite{Goy89}). Moreover, the proof of Theorem \ref{C} differs from Goyo's aproach: It is simplicial, and does not rely on the Federer-Schultz spectral sequence. Combining Theorem \ref{C} with the rational models for mapping spaces from (\cite{Bui13B,Ber15}), and under certain connectivity assumptions, we explicitly compute not just the homotopy groups of the mapping spaces of interest but also their full rational homotopy type. This is Theorem \ref{Models of homotopy fixed points}, see the corresponding section for notation and details.

\begin{thm} \label{D} \em
Let $G$ be a finite group, let $X$ be a connected $G$-space of dimension $n$ and let $Y$ be an $(n+1)$-connected $G$-space of finite $\Q$-type. If $A$ is a $G$-CDGA model of $X$  concentrated in degrees $0$ to $n$, and $L=L_{\geq n+1}$ is a $G$-$L_\infty$-model for $Y$, then there is a natural homotopy equivalence of Kan complexes 
	\begin{equation*}
	\MC\left(\left(A \otimes L\right)^G\right) \xrightarrow{\simeq}\Map\left(X,Y_\Q\right)^{hG},
	\end{equation*} 
	where the source is the Maurer-Cartan simplicial set of the $L_\infty$-algebra of $G$-invariant elements of $A\otimes L$, and the target are the homotopy fixed points of the simplicial mapping space $\Map\left(X,Y_\Q\right)$.
\end{thm}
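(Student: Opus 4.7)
The plan is to bootstrap the non-equivariant Berglund--Buijs rational model for mapping spaces into an equivariant statement, then use Theorem \ref{C} to convert homotopy fixed points into honest fixed points, which are then recognized as the Maurer--Cartan simplicial set of the invariant $L_\infty$-subalgebra. The dimension/connectivity hypotheses will ensure that $A\otimes L$ is concentrated in non-negative degrees and complete, so that every step makes sense.

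Under the given assumptions, the results of Berglund and Buijs provide a natural weak equivalence of Kan complexes
\begin{equation*}
  \MC(A\otimes L)\xrightarrow{\simeq}\Map(X,Y_\Q),
\end{equation*}
where $A\otimes L$ is regarded as a complete $L_\infty$-algebra (well-defined because $A$ is bounded above by $n$ and $L$ is concentrated in degrees $\geq n+1$, so the tensor product is positively graded and the filtration by bracket arity makes it complete). The first step is to check that this equivalence is $G$-equivariant when $A\otimes L$ is endowed with the diagonal $G$-action inherited from the $G$-actions on $A$ and $L$, and when $\Map(X,Y_\Q)$ carries the conjugation action $(\sigma\cdot f)(x)=\sigma\cdot f(\sigma^{-1}\cdot x)$. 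This is a naturality check: the Berglund--Buijs construction is functorial in $(A,L)$ with respect to morphisms of ($G$-equivariant) CDGAs and $L_\infty$-algebras, hence any automorphism of the pair $(A,L)$ induces the corresponding automorphism on the mapping space side.

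Since homotopy fixed points preserve weak equivalences between fibrant $G$-simplicial sets, this promotes to a weak equivalence $\MC(A\otimes L)^{hG}\xrightarrow{\simeq}\Map(X,Y_\Q)^{hG}$. Now $A\otimes L$ is a complete $L_\infty$-algebra concentrated in degrees $\geq n+1\geq 1$, so Theorem \ref{C} applies and produces a weak equivalence $\MC(A\otimes L)^G\hookrightarrow \MC(A\otimes L)^{hG}$. Finally, because $G$ acts trivially on Sullivan's polynomial forms $\Omega_\bullet$, one has $(\Omega_n\otimes A\otimes L)^G=\Omega_n\otimes(A\otimes L)^G$ as CDGAs, from which a straightforward inspection yields a natural isomorphism of simplicial sets $\MC(A\otimes L)^G\cong\MC\bigl((A\otimes L)^G\bigr)$. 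Splicing these maps together produces the stated homotopy equivalence.

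The main obstacle will be establishing the $G$-equivariance of the Berglund--Buijs equivalence. While naturality provides it in principle, their construction factors through several intermediate objects (homotopy transfer data, bar-cobar resolutions, and rectifications between $L_\infty$- and dg Lie models), and one must verify that all these choices can be made $G$-equivariantly, or alternatively recast the equivalence as a zig-zag of $G$-$L_\infty$-algebras in which functoriality is manifest. Once this equivariant enhancement is in place, the remaining steps are formal: they only use that $\MC$ commutes with limits in the $L_\infty$-variable and that Theorem \ref{C} is available.
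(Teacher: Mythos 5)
Your overall skeleton is the same as the paper's: an equivariant mapping-space model, passage to homotopy fixed points, Theorem \ref{C} to replace homotopy fixed points of the Maurer--Cartan space by honest fixed points, and the identification $\MC(A\otimes L)^G\cong\MC\bigl((A\otimes L)^G\bigr)$. However, the step you yourself flag as ``the main obstacle'' is a genuine gap, and it is exactly the point where your route diverges from a complete argument. For an \emph{arbitrary} $G$-CDGA model $A$ of $X$, the Berglund--Buijs equivalence $\MC(A\otimes L)\simeq\Map(X,Y_\Q)$ is not a single natural map in $X$: it is built through a chosen quasi-isomorphism relating $A$ to the PL-forms on $X$ (plus transfer/rectification data), so ``naturality of the construction in $(A,L)$'' does not by itself make the equivalence $G$-equivariant for the conjugation action; without an equivariant comparison map you cannot apply homotopy fixed points to it. The paper sidesteps this entirely: it proves equivariance only for the functorial model $\Omega(X)$, where Berglund's explicit map $\varphi:\MCG\bigl(\Omega(X)\widehat{\otimes}L\bigr)\to\Map\bigl(X,\MCG(L)\bigr)$ is checked directly to be a $G$-map (Theorem \ref{Model of Map+Conjugation}), then applies homotopy fixed points (a $G$-weak equivalence of fibrant objects induces one on homotopy fixed points), Theorem \ref{HomEq}, and Remark \ref{Remark1} to get $\MC\bigl((\Omega(X)\widehat{\otimes}L)^G\bigr)\simeq\Map(X,Y_\Q)^{hG}$. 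The passage from $\Omega(X)$ to your chosen model $A$ is then done \emph{after} taking invariants, using the averaging argument for finite groups in characteristic zero (Lemma \ref{Lema1}): an equivariant quasi-isomorphism $A\to\Omega(X)$ induces a quasi-isomorphism $(A\otimes L)^G\to(\Omega(X)\otimes L)^G$, hence an equivalence of Maurer--Cartan spaces. If you reorganize your proof this way, the equivariance problem you worry about never arises.

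Two smaller points. First, $A\otimes L$ is concentrated in degrees $\geq 1$ (not $\geq n+1$ as you write): $A$ sits in cohomological degrees $0$ to $n$ and $L$ in degrees $\geq n+1$, so the lowest degree is $(n+1)-n=1$; this is still enough for completeness (degree-wise nilpotence) and for Theorem \ref{C}, so the slip is harmless. Second, your completeness justification ``the filtration by bracket arity makes it complete'' should really invoke the degree-wise nilpotence coming from the positive grading, or use the completed tensor product as the paper does; as stated it is too terse to stand on its own.
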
 

The requirement that $Y$ can be modelled by a $G$-$L_\infty$-algebra is not superfluous. For example, spaces with a free $G$-action are not of this type. This is because the Maurer-Cartan simplicial set of an $L_\infty$-algebra always has at least one fixed point, given by the zero element. Fortunately, wedges of $n$-fold suspensions admit such $G$-$L_\infty$-models. The dimension and connectivity assumptions in Theorem \ref{D} are needed to ensure the connectedness of the mapping space. \\

	Let $Y=\Sigma^n X$ be the $n$-fold reduced suspension of a rational space. Theorem \ref{D} is crucial for proving Theorem \ref{B}, since the latter requires, for $r=2$, the computation of the rational homotopy groups of the mapping space $\Map\left(\mathcal{D}_n(r), \left(\Sigma^n X\right)^{\vee r}\right)^{\S_r}$. Given that $n$-fold suspensions are at least $n$-connected, and the little $n$-disks operad is rationally formal (\cite{San05}), Theorem \ref{D} computes the rational homotopy groups of $\Map\left(\mathcal{D}_n(2), \left(\Sigma^n X\right)^{\vee 2}\right)^{h\S_2}$. This is explained in detail in Section \ref{Cooperations}. The homology of the binary part of the little $n$-disks is shown to induce the operations. Recall that this homology is $2$ dimensional, thus it induces two operations in homotopy: One corresponds to the pinch map, and the other is the homotopy Browder cooperation. \\

\noindent {\bf Acknowledgements:} The authors are very grateful to Marc Stephan for teaching them about equivariant homotopy theory. The authors would also like to thank Alexander Berglund, Tyler Lawson, Mark Penney and Gabriel Valenzuela for some useful conversations. We are further grateful  to the Max Planck Institute for Mathematics in Bonn for its hospitality and financial support.

\subsection{Notation and conventions} 

In this paper we assume that all spaces are of the homotopy type of a CW-complex. The \emph{reduced suspension} of a pointed space $X$ is defined as
$$\Sigma X=\left(X\times I\right) / \left(X \times\{0\} \cup \{*\}\times I\cup X \times \{1\}\right).$$ 
We will always work with reduced suspensions in this paper, hence we drop the word \emph{reduced} from now on. Let $\Sigma^1 X := \Sigma X$. Then for $n\geq 2,$ the \emph{$n$-fold suspension} of a space $X$ is $\Sigma^n X = \Sigma \left(\Sigma^{n-1} X\right)$. We consistently use $D^n$ for the closed unit $n$-disk in $\R^n$, and $S^n$ for the $n$-sphere which we implicitly identify with $D^n/\partial D^n$. The symmetric group on $k$ letters is denoted by $\S_k$. The linear dual of a graded vector space $V$ is denoted by $V^\vee$.  If $X$ and $Y$ are two pointed spaces then we denote the space of continuous base point preserving maps by $\Map_*(X,Y)$, the space of continuous unbased maps is denoted by $\Map(X,Y)$.

\subsection{Preliminaries on the little disks operad and loop spaces}\label{Recall loops}

In this section we recall the relevant definitions related to operads and their algebras, for more details see for example \cite{May72}, \cite{Lod12} or \cite{Mar02}. We assume that the reader is familiar   with the basics of operad theory and this section is mainly meant as a short reminder and to fix notation.\\

 Let $\mathcal C=\left(\mathcal C,\otimes, 1\right)$ be a symmetric monoidal category. An \emph{$\S$-module} in $\mathcal C$ is a sequence $\mathcal P=\left\{\mathcal P\left(n\right)\right\}_{n\geq 0}$ of objects of $\mathcal C$, such that each $\mathcal{P}(n)$ has a right $\S_n$-action. An \emph{operad} in $\mathcal C$ is an $\S$-module $\P$ together with composition maps
 \[
 \gamma:\P(r) \otimes \P(n_1) \otimes \cdots \otimes \P(n_r)\rightarrow \P(n_1 + \cdots + n_r),
 \]
for every $r,n_1,...,n_r \geq 0$. These composition maps are required to be associative and equivariant with respect to the symmetric group actions  in the sense of \cite{May72}, and we further require that there is an element $1 \in \P(1)$ acting as a unit. Because of this unit, the maps $\gamma$ can also be described in terms of partial compositions. An operad is therefore equivalent to a family of maps
\[
 \circ_i:\P(n) \otimes \P(m) \rightarrow \P(n+m-1),
\]
for every $n\geq 1$, $m \geq 0$ and $1 \leq i \leq n$, satisfying similar conditions as the maps $\gamma$. The symmetric monoidal categories that we consider in this paper are those of (unpointed) spaces $\left(\Top,\times, *\right)$ and chain complexes  $\left(\Vect,\otimes, \K\right)$. An \emph{algebra} over an operad $\P$ in a symmetric monoidal category $\left(\mathcal{C},\otimes, 1 \right)$ is an object $A\in \mathcal C$ together with a sequence of maps
\begin{equation*}
	\gamma:\P(n) \otimes A^{\otimes n} \rightarrow A, \quad n\geq 0,
\end{equation*} which are compatible with the symmetric group actions and the composition maps of $\P$.\\

For $n\geq 1$, the \emph{little $n$-disks operad} $\mathcal{D}_n$ is an operad in unpointed spaces. For each $r\geq1$, $\mathcal D_n(r)$ is defined as  the subspace of the mapping space $$\mathcal D_n(r)\subseteq \operatorname{Map}\left(\coprod_r D^n, D^n \right)$$ given by the rectilinear embeddings for which the images of different disks are disjoint. For $r=0$, $\mathcal D_n(0)=*$ is the one point space given by the embedding of the empty set in $D^n$; this element will induce a unit on algebras over the $\mathcal{D}_n$ operad. For $r \geq 1$, an element $x\in \mathcal D_n(r)$ can be written as a sequence $x=(f_1,...,f_r)$, with each $f_i:D^n \hookrightarrow D^n$ an embedding. It is common to identify each $f_i$ with its image inside of $D^n$ and think of $x$ as $r$ disjoint disks inside  $D^n$ labeled by $i$. The symmetric group $\S_r$ acts on $\mathcal D_n(r)$ by permuting the labels of the little disks. The unit $1\in \mathcal D_n(1)$ is the identity of $D^n$, and in terms of the partial composition, the operadic composition is given by 
\begin{equation*}
	\left(f_1 , ... , f_r \right) \circ_i \left(g_1 ,..., g_s\right) = \left(f_1 , ... , f_{i-1}, f_i\circ g_1 , .... , f_i\circ g_s, f_{i+1},...,f_r\right).
\end{equation*} The operad described has its roots in the work by Boardman-Vogt and May (\cite{May72,Boa73}). It is  well known  that each $\mathcal D_n(r)$ is equivariantly homotopy equivalent to the configuration space of $r$ points in the Euclidean $n$-dimensional space, $\operatorname{Conf}_r(\R^n)$. \\

Recall that the $n$-fold loop space $\Omega^n X$ of a based space $X=(X,*)$ is the based mapping space $\Omega^n X=\operatorname{Map}_*\left(S^n,X\right)$. Iterated loop spaces are algebras over the little disks operads (see \cite{May72,Boa73}): The $n$-fold loop space $\Omega^n X$ algebra structure over $ \mathcal D_n$ arises from the equivariant continuous maps for all $r\geq 1$
\begin{equation}\label{ActionOnLoops}
	\begin{tikzcd}[row sep=tiny]
	\mathcal D_{n}(r) \times \left(\Omega ^n X\right)^{\times r} \arrow[r] & \Omega^n X                 \\
	{(x,\alpha_1,...,\alpha_r)} \arrow[r, maps to]                         & {x(\alpha_1,...,\alpha_r),}
	\end{tikzcd}
\end{equation} so that for $x=(f_1,...,f_r) \in \mathcal D_{n}(r)$ a little $n$-disk, $x(\alpha_1,...,\alpha_r)$ is identified with the map $\left(D^n,\partial D^n\right) \to \left(X,*\right)$ collapsing the complement in $D^n$ of the images of the little disks to the base point, $D^n \smallsetminus \bigcup_i \Img(f_i)\mapsto *$, and acting as $\alpha_i$ rescaled in the image of each $f_i$.  The equivariant maps  described in (\ref{ActionOnLoops}) endow the homology of a connected $n$-fold loop space with operations
\begin{equation*}
\begin{tikzcd}
H_*\left(\mathcal D_{n}(r)\right) \otimes H_*\left(\Omega ^n X\right)^{\otimes r} \arrow[r] & H_*\left(\Omega^n X\right),
\end{tikzcd}
\end{equation*} 
which contain important homotopical information about the space. When $r=2$, there is a homotopy equivalence of $\S_2$-spaces between $\mathcal D_n(2)$ and  $S^{n-1}$ with the antipodal action, so the homology  $H_*\left(\mathcal D_{n}(2)\right)$ is two dimensional and induces two operations on the homology of an $n$-fold loop space. The degree zero operation corresponding to the zeroth homology class is the \emph{Pontryagin product} given by concatenation of loops,
\begin{equation*}
	*:H_*(\Omega^n X) \otimes H_*(\Omega^n X) \to H_*(\Omega^n X),
\end{equation*} while the fundamental class of $S^{n-1}$ induces an operation of degree $(n-1)$  called the \emph{Browder operation} (or \emph{Browder bracket}),
\begin{equation*}
\lambda_n:H_*(\Omega^n X) \otimes H_*(\Omega^n X) \to H_{*-(n+1)}(\Omega^n X).
\end{equation*} 
The operation $\lambda_n$ is closely related to the Whitehead product (see \cite[p. 215]{Coh76}). The importance of $\lambda_n$ is that it forms an obstruction for an $n$-fold loop space to be an $(n+1)$-fold loop space, which can be seen as follows. If $X$ is an $n$-fold loop space, then it is in particular a $k$-fold loop space for $k \leq n$. Thus, there are Browder brackets $\lambda_{k}$ of degree $(k-1)$ for every $k\leq n$. The sequence of little disks operads comes with a sequence of inclusion maps, given by the inclusion of the little $(k-1)$-disks operad into the little $k$-disks operad. This gives rise to a sequence of operad maps
\[
 \mathcal D_{1}\hookrightarrow \mathcal D_{2} \hookrightarrow \cdots \hookrightarrow \mathcal{D}_n.
\]
On the arity $2$-components, this sequence corresponds to the sequence of inclusions of $S^{n-1}$ into $S^n$ for all $n\geq 1$. Therefore, when interpreting a $\mathcal{D}_n$-algebra as a $\mathcal{D}_k$-algebra for $k<n$, all the binary operations induced by elements of degree less than $n$, except the operation of degree $0$, vanish in homology. This is because the top homology class of $\mathcal{D}_k(2)$ is sent to zero by the inclusion map. Thus, for the Browder operations we get a finite sequence of maps
\[
H_*\left(\mathcal D_{1}(2)\right) \otimes H_*\left(\Omega ^n X\right)^{\otimes 2} \rightarrow H_*\left(\mathcal D_{2}(2)\right) \otimes H_*\left(\Omega ^n X\right)^{\otimes 2} \rightarrow \cdots \rightarrow H_*\left(\mathcal D_{n}(2)\right) \otimes H_*\left(\Omega ^n X\right)^{\otimes 2}\rightarrow H_*\left(\Omega^n X\right).
\]
It then follows that for an $n$-fold loop space, all the Browder operations of degree $k <n-1$  vanish. If the $n$th Browder operation $\lambda_n$ is non-zero, then $\Omega^n X$ cannot be an $(n+1)$-fold loop space, since there is no possible inclusion $\mathcal{D}_n(2) \times \Omega^n X \times \Omega^n X \hookrightarrow \mathcal{D}_{n+1}(2) \times \Omega^n X \times \Omega^n X$. Thus, the Browder operation is an obstruction for an $n$-fold loop space to be an $(n+1)$-fold loop space.

\section{Iterated suspensions are  coalgebras over the little disks operad}\label{Suspensions are coalgebras}

In this section, we prove that the $n$-fold suspension of a pointed space is a coalgebra over the little $n$-disks operad (Theorem \ref{A}).

\begin{theorem}\label{Main}
	Let $\Sigma^n X$ be the $n$-fold reduced suspension of a pointed space $X$, and denote by $\mathcal{D}_n$ the little $n$-disks operad. There is a natural map of operads
	\begin{equation*}\label{CoAction}
	\nabla: \mathcal{D}_n \rightarrow \coend\left(\Sigma^n X\right)
	\end{equation*} which encodes the homotopy coassociativity and homotopy cocommutativity of the pinch map. That is, $n$-fold reduced suspensions are coalgebras over the little $n$-disks operad.
\end{theorem}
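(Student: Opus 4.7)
The plan is to dualize May's construction of the $\mathcal{D}_n$-algebra structure on $\Omega^n X$. Concretely, I would identify $\Sigma^n X = X \wedge S^n$ with $S^n = D^n/\partial D^n$ and build the operad map from a map into $\coend(S^n)$, which is then smashed with $\id_X$. For each $x = (f_1, \ldots, f_r) \in \mathcal{D}_n(r)$, define a \emph{pinch map} $p_x \colon S^n \to (S^n)^{\vee r}$ by sending a point $[t]$ with $t \in D^n$ to the basepoint whenever $t$ lies in the complement of $\bigcup_i f_i(\operatorname{int} D^n)$, and otherwise to $[f_i^{-1}(t)]$ in the $i$-th wedge summand, where $i$ is the unique index with $t \in f_i(\operatorname{int} D^n)$. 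For $r=0$ take the constant map to the basepoint. Then set $\nabla(x) := \id_X \wedge p_x$. This is Eckmann-Hilton dual to May's formula: May uses the rectilinear embeddings to parametrise where the input loops are placed, while here the preimages $f_i^{-1}$ extract an output suspension coordinate in each wedge summand.

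The first step is to verify that $p_x$ is well-defined, based, and continuous. The only potential continuity issue arises on $\partial f_i(D^n)$, but $f_i$ sends $\partial D^n$ into $\partial D^n$ which collapses to the basepoint of $S^n$, matching the value prescribed on the complement; disjointness of the interiors ensures the piecewise definition is unambiguous. Continuity of the adjoint $\mathcal{D}_n(r) \to \coend(\Sigma^n X)(r)$ follows from the fact that rectilinear embeddings and their local inverses depend continuously on the parameters (centres and radii) of the disks. Symmetric equivariance is immediate, since $\S_r$ acts on $\mathcal{D}_n(r)$ by permuting the labels of the little disks and on $\coend(\Sigma^n X)(r)$ by permuting the wedge outputs, and these actions are matched tautologically by the construction of $p_x$.

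The substantive step is checking compatibility with operadic composition. Given $x = (f_1, \ldots, f_r)$ and $y_i = (g^i_1, \ldots, g^i_{n_i})$, I would compare $p_{\gamma(x; y_1, \ldots, y_r)}$ with the composite $(p_{y_1} \vee \cdots \vee p_{y_r}) \circ p_x$, tracing a point $[t]$ through both: it lands in the $(i,j)$-th summand via $[(g^i_j)^{-1}(f_i^{-1}(t))]$ exactly when $t \in f_i(g^i_j(\operatorname{int} D^n))$, and in the basepoint otherwise. Because rectilinear embeddings compose cleanly as affine maps and $(f_i \circ g^i_j)^{-1} = (g^i_j)^{-1} \circ f_i^{-1}$ on the relevant region, the two prescriptions coincide on the nose. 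The unit axiom is immediate from $p_{\id_{D^n}} = \id_{S^n}$. The main technical obstacle is this composition bookkeeping: one must verify carefully that the various collapse identifications (boundary of $D^n$, complements of the little disks, basepoint of $X$) are respected both in the composite and in the composed little disk, so that the coincidence holds strictly as pointed maps rather than merely up to homotopy. Once $\nabla$ is an operad map, the interpretation as encoding homotopy coassociativity and cocommutativity of the pinch map is immediate: any chosen point of $\mathcal{D}_n(2)$ recovers (up to reparametrisation) the classical equatorial pinch $\Sigma^n X \to \Sigma^n X \vee \Sigma^n X$, and the operadic coherences of $\mathcal{D}_n$ supply the corresponding homotopies.
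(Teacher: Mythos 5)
Your proposal is correct and follows essentially the same route as the paper: first the explicit collapse-and-rescale pinch map $p_x\colon S^n\to (S^n)^{\vee r}$ giving $\mathcal{D}_n\to\coend(S^n)$, then smashing with $\id_X$ and using the distributivity of smash over wedge to obtain the coalgebra structure on $\Sigma^n X$. The verifications you spell out (continuity, equivariance, strict compatibility with operadic composition) are exactly the checks the paper leaves to the reader, so there is no gap.
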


To show the result above, we first carefully define what a coalgebra over an operad in topological spaces is (Definition \ref{Coalgebra}). Then we show that for $n \geq 1$, the $n$-dimensional sphere $S^n$,  is a coalgebra over the little $n$-disks operad (Proposition \ref{Spheres}). It is then a  consequence of the distributivity of the wedge and smash product that the $n$-fold suspension of a space $X$ is a coalgebra over the little $n$-disks operad.\\

When dealing with an operad $\mathcal{P}$ in chain complexes governing some sort of algebraic structure, coalgebraic structures of a dual nature are simply coalgebras over the linear dual cooperad formed by dualizing the underlying symmetric sequence of $\mathcal{P}$. Under some finiteness assumptions, this dual symmetric sequence forms a cooperad describing the dual coalgebras. However, it is not possible to dualize a topological space. To solve this problem, we first define the coendomorphism operad of a pointed space, and then define a coalgebra over a topological operad as a morphism of operads to this coendomorphism operad. These definitions are the topological analogs of the similar and more familiar notions in chains complexes (see  \cite[Section 5.2.15]{Lod12}).

\begin{definition}
Let $X$ be a pointed space. The \emph{coendomorphism operad} $\coend(X)$ has arity $r$ component 
\begin{equation*}
	\coend(X)(r):=\Map_*(X,X^{\vee r}),
\end{equation*} 
the based mapping space from $X$ to the $r$-fold wedge sum of $X$ with itself. For $r=0$, set \\ $\coend(X)(0)=\Map_*(X,*)=*$. The operadic composition map is defined as
\[
\gamma:\Map_*(X,X^{\vee n})\times \Map_*(X,X^{\vee m_1}) \times \cdots \times \Map_*(X,X^{\vee m_n}) \rightarrow \Map_*(X,X^{\vee \sum m_i}),
\]
 \[
 \gamma( f,g_1,...,g_n):= (g_1\vee...\vee g_n) \circ f,
 \]
i.e., composing $g_i$ with the $i$-th factor of the wedge. The symmetric group action permutes the wedge factors in the output. 
\end{definition}

We leave it to the reader to check that this is indeed an operad. The coendomorphism operad of a pointed space gives an operad in pointed spaces, but we will from now on consider it as an operad in unpointed spaces. This is because the spaces forming the little disks operad, which is the most important operad in this paper, do not have a natural base point. Considering the coendomorphism operad as an operad in unpointed spaces does have the consequence that we mix products, coproducts and mapping spaces of based and unbased spaces. These issues can be avoided by adding a disjoint base point in each arity of the little disks operad and then only considering pointed maps. We have chosen not to do this, since it feels more natural to use the unpointed little disks operad.

\begin{definition}\label{Coalgebra}
 Let $\P$ be an (unpointed) operad in topological spaces. A \emph{$\P$-coalgebra} is a pointed space $X$ together with a morphism of operads
\begin{equation*}
	\nabla:\P \rightarrow \coend(X).
\end{equation*}  Using the $\Map$-product adjunction of unpointed $\S_r$-spaces (see equation (\ref{Equivariant Tensor-Hom}), Section \ref{Equivariant}), we see that the map $\nabla$ defines a sequence of coproducts $\Delta_r$ on the space $X$, given by $\S_r$-equivariant maps
\begin{equation*}
	 \Delta_r:\P(r) \times X  \rightarrow X^{\vee r}
\end{equation*} 
\[
 (\phi,x)\mapsto \nabla(\phi)(x)
\]
for $x \in X$ and $\phi \in \P(r)$. This map is defined by using the evaluation map $X \times \Map_*(X,X^{\vee r}) \rightarrow X^{\vee r}$, sending $(x,\varphi)$ to $\varphi(x)$, for $x \in X$ and $\varphi \in \Map_*(X,X^{\vee r})$. 
\end{definition}

\begin{remark}\label{Sequence of maps}
By using the equivariant $\Map$-product adjunction, we see that  there are several equivalent notions of a coalgebra over a topological operad. The definition of a coalgebra as a sequence of coproduct maps
\begin{equation*}
	\Delta_r:\P(r) \times X \rightarrow X^{\vee r}
\end{equation*}
is also equivalent to a sequence of maps
\begin{equation*}
	 \Delta_r':X \rightarrow \Map\left(\P(r),X^{\vee r}\right)^{\S_r},
\end{equation*} 
where $\Map(\P(r),X^{\vee r})^{\S_r}$ is the subspace of $\S_r$-invariant maps. This alternative notion has the advantage of staying in the category of spaces and not in the category of $\S_r$-spaces. Moreover, from the perspective of rational homotopy theory, the mapping space $\Map(\P(r),X^{\vee r})^{\S_r}$ is easier to study than the space of equivariant maps (this will be relevant in Sections \ref{Equivariant Rational} and \ref{Cooperations}).
\end{remark}

\begin{remark}
 A similar version of a coalgebra over a non-symmetric topological operad was described by Klein, Schw\"anzl and Vogt in \cite{KSV97}. In that paper, they define a  coalgebra $X$ over a (non-symmetric) topological operad $\mathcal{P}$ as a sequence of maps $\mu_n:\mathcal{P}^{+}(n) \wedge X\rightarrow X^{\vee n}$, where the $+$ denotes the addition of a disjoint base point to the operad $\mathcal{P}$. They further use this to describe the homotopy associativity on co-$H$-spaces. This paper improves their result by also taking the homotopy commutativity into account.
\end{remark}

To show that iterated suspensions are coalgebras over the little disks operad, we show first that the $n$-sphere is a coalgebra over the little $n$-disks operad $\mathcal{D}_n$, for $n\geq 1$.

\begin{proposition}\label{Spheres}
	For every $n\geq 1$, there is an explicit morphism of operads $$\nabla:\mathcal{D}_n\rightarrow \coend\left(S^n\right)$$ which turns the $n$-sphere into a coalgebra over the little $n$-disks operad $\mathcal{D}_n$. 
\end{proposition}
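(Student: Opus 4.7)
The plan is to write down an explicit pinch-and-rescale map using the model $S^n = D^n/\partial D^n$, and then verify the operad axioms by direct inspection. For $r \geq 1$ and $x = (f_1, \ldots, f_r) \in \mathcal{D}_n(r)$, I would define $\nabla(x): S^n \to (S^n)^{\vee r}$ to be the map which, on a class $[y]\in D^n/\partial D^n$ with $y\in \mathrm{int}\,f_i(D^n)$, outputs the class $[f_i^{-1}(y)]$ in the $i$-th wedge summand, and sends every other point (including points outside $\bigcup_i \Img(f_i)$ and the collapsed point $[\partial D^n]$) to the basepoint of the wedge. For $r=0$, set $\nabla$ to be the unique map to the one-point space $\coend(S^n)(0)=*$.

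First I would check that each $\nabla(x)$ is a well-defined continuous pointed map. Well-definedness holds because the rectilinear embeddings have pairwise disjoint images. Continuity at points of $\partial(f_i(D^n))$ is automatic: as $y$ approaches $\partial(f_i(D^n))$ from inside, $f_i^{-1}(y)$ approaches $\partial D^n$, whose class in the $i$-th copy of $S^n$ is the basepoint of that wedge summand, matching the value assigned to points outside the little disks. Continuity of the assignment $x\mapsto \nabla(x)$ in the compact-open topology follows from the continuous dependence of each $f_i$, together with its inverse on compact subsets of its image, on the center/radius parameters coordinatizing $\mathcal{D}_n(r)$. Equivariance under $\S_r$ is built into the definition: a permutation $\sigma$ relabels the little disks of $x$ and correspondingly relabels the wedge summands in the output. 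The unit axiom is clear, since $1 = \mathrm{id}_{D^n}\in \mathcal{D}_n(1)$ is sent to $\mathrm{id}_{S^n}$.

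The core step is compatibility with the partial compositions $\circ_i$. For $x = (f_1,\ldots,f_r)\in \mathcal{D}_n(r)$ and $y = (g_1,\ldots,g_s)\in \mathcal{D}_n(s)$, I would verify that $\nabla(x\circ_i y)$ and $\gamma\bigl(\nabla(x);\mathrm{id},\ldots,\nabla(y),\ldots,\mathrm{id}\bigr)$ agree by tracing a point $[z]\in S^n$ through both constructions. On $\mathrm{int}\,f_i(g_k(D^n))$ both send $[z]$ to $[g_k^{-1}f_i^{-1}(z)]$ in the summand of $(S^n)^{\vee(r+s-1)}$ indexed by $(i,k)$; on $\mathrm{int}\,f_j(D^n)$ for $j\neq i$ both send $[z]$ to $[f_j^{-1}(z)]$ in the corresponding summand; and everywhere else both send $[z]$ to the basepoint.

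The main obstacle I anticipate is bookkeeping rather than anything conceptually deep: one has to be careful with the indexing of wedge summands after partial compositions, and with verifying the compact-open continuity of $\nabla$ rigorously (as opposed to only pointwise). Once the boundary behavior is pinned down by the collapse convention, the operadic relations follow on the nose from the geometry of disjoint rectilinear embeddings of disks, and the extension to $\Sigma^n X = S^n \wedge X$ asserted in Theorem~\ref{Main} will then come from smashing $\nabla$ with $X$ and using the canonical identification $(S^n \wedge X)^{\vee r} \cong (S^n)^{\vee r} \wedge X$.
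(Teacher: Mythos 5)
Your construction is exactly the paper's: identify $S^n \cong D^n/\partial D^n$, collapse the complement of the little disks' interiors to the basepoint, and rescale by $f_i^{-1}$ on each disk, then check continuity, equivariance, the unit, and compatibility with $\circ_i$ by tracing points. The proposal is correct and in fact fills in several verifications (boundary continuity, compact-open continuity of $x\mapsto\nabla(x)$, the partial-composition bookkeeping) that the paper explicitly leaves to the reader.
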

\begin{proof}
	In view of Definition \ref{Coalgebra}, we must define an operad map from the little $n$-disks operad to the coendomorphism operad of the $n$-sphere. Recall that $D^n$ denotes the unit disk of $\R^n$, and identify $S^n\cong D^n/ \partial D^n$, taking the class of any point in $\partial D^n$ as the basepoint of $S^n$. This identifies the interior of $D^n$ with the sphere without its base point. The arity $r$ component $\nabla_r:\mathcal{D}_n(r) \rightarrow \coend\left(S^n\right)(r)$ is defined as follows. Let $x \in \mathcal{D}_n(r)$, and write $x=(f_1,...,f_r)$, for $f_j:D^n \hookrightarrow D^n$ the embeddings of the disk $D^n$ such that the images of all the $f_j$ are pairwise disjoint. Note that each map $f_j$ defines a homeomorphism between the interior of $D^n$ and the interior of its image. The map $\nabla_r(x):S^n \rightarrow (S^n)^{\vee r}$ is then defined by collapsing everything outside of the interiors of the images of the maps $f_j$ to the base point of the wedge. A priori, this only defines a map from $D^n \rightarrow (S^n)^{\vee r}$, but since the boundary of $D^n$ gets mapped to the base point as well, it factors to a map $S^n \rightarrow (S^n)^{\vee r}$. After collapsing everything outside of the interior of the little disks, the resulting disks are no longer unit disks. But it suffices to apply $(f_j)^{-1}$ to each of the disks to land in the wedge of unit disks. To show that the collection of $\nabla_r$'s is a morphism of operads, it remains to show that these maps are continuous and that they commute with the symmetric group actions and the operadic compositions. Both these things are straightforward and are left to the reader. 
\end{proof}

Figure \ref{fig1} illustrates what the operadic action on $S^2$ looks like. In the top picture, we have identified $S^2$ without  its base point with the interior of $D^2$. Now that we have defined the little $n$-disk coalgebra structure on $S^n$ we can extend this to $n$-fold suspensions.\\

{\noindent \it Proof of Theorem \ref{Main}:} Let $\Sigma^nX$ be the $n$-fold suspension of a pointed space $X$, and write $\Sigma^nX=S^n\wedge X$. For any three pointed spaces $X$, $Y$ and $Z$, the wedge and smash product are distributive (\cite[S. 4.F ]{Hat02}),
\begin{equation*}
	X \wedge \left( Y \vee Z \right) \cong \left( X \wedge Y \right) \vee \left( X \wedge Z \right).
\end{equation*} In particular,
\[
\Sigma^n(Y \vee Z) \cong \Sigma^n Y \vee \Sigma^n Z.
\]
 Then, for $x\in \mathcal{D}_n(r)$ a little $n$-disk, define the map $\Sigma^nX\to \left(\Sigma^nX\right)^{\vee r}$ as the composition
\begin{equation*}
	\Sigma^nX \cong S^n \wedge X \xrightarrow{\nabla_r(x) \wedge \id_X} \left( \left(S^n\right)^{\vee r}\right) \wedge X \xrightarrow{\cong} \left( S^n \wedge X \right)^{\vee r} \cong \left(\Sigma^nX\right)^{\vee r},
\end{equation*} where $\nabla_r$ is the comultiplication given by Proposition \ref{Spheres}. We leave it to the reader to show that all these maps are continuous and commute with the symmetric group actions and the operadic composition maps.
\hfill$\square$ \\

\begin{figure}[h!]\label{fig1}
\centering 
\includegraphics[scale=0.65]{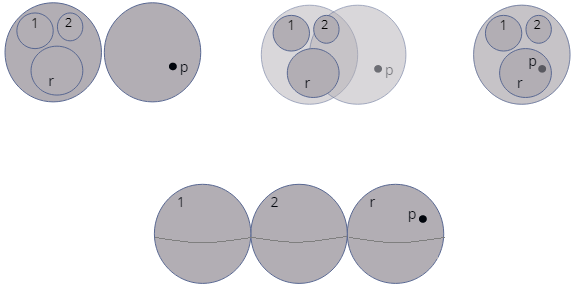}
\caption{A disk with three little disks inside induces a map $S^2\to S^2 \vee S^2 \vee S^2$.} 
\end{figure}

\begin{remark}
 The structure obtained in Theorem \ref{Main} is very close to the $\mathcal{D}_n$-algebra structure on $n$-fold loop spaces. Indeed, each  map $S^n\rightarrow S^n \vee ... \vee S^n$ arising from the little disks operad induces a multiplication on the $n$-fold loop space of a pointed space $X$. This multiplication can be seen as a "convolution" product on the $n$-fold loop space: If $\alpha_1,...,\alpha_r:S^n\rightarrow X$ and $\gamma \in \mathcal{D}_n(r)$, then we define $\gamma(\alpha_1,...,\alpha_r)$ as 
 \[
S^n \xrightarrow{\gamma} ( S^n )^{\vee r} \xrightarrow{\alpha_1 \vee ... \vee \alpha_r} X^{\vee r} \xrightarrow{fold} X,
 \] where $fold$ is the folding map, i.e. the canonical map from the $r$-fold coproduct of $X$ to $X$. It is straightforward to check that these maps are exactly the maps described in \cite[Section 5]{May72}. 
\end{remark}

The next goal of this paper is to use this little $n$-disks coalgebra structure on $n$-fold suspensions to define cooperations on the homotopy groups of suspensions. This would require us to compute the homotopy groups of $\Map\left(\mathcal{D}_n(r),(\Sigma^nX)^{\vee r} \right)^{\S_r}$, which is in general extremely hard. In Section \ref{Cooperations}, we will extract some cooperations out of these mapping space when the space $X$ is rational. We further point out that the fixed points in the definition are essential for obtaining cooperations. A naive thing to do is to compose the maps $X \rightarrow \Map\left(\mathcal{D}_n(r),(\Sigma^nX)^{\vee r} \right)^{\S_r}$ with the inclusion of the fixed points into the original space, i.e. to study the maps 
$$X \rightarrow \Map\left(\mathcal{D}_n(r),(\Sigma^nX)^{\vee r} \right)^{\S_r}\rightarrow \Map\left(\mathcal{D}_n(r),(\Sigma^nX)^{\vee r} \right).$$
 A fairly straightforward but tedious computation shows that, at least in the rational case, the compositions are homotopic to the usual pinch map, and therefore do not contain any additional topological information. It is therefore essential to study the fixed points, which is the goal of the remainder of this paper.


\section{Equivariant rational homotopy theory}\label{Equivariant Rational}

To study the rational homotopy cooperations induced by the little disks coalgebra structure on an $n$-fold suspension, we must extend and complement certain results in (discrete) equivariant rational homotopy theory. This is an independent and self-contained section on it.\\

In this section, we will first recall some basic facts about equivariant $G$-spaces and $G$-simplicial sets in Section \ref{Equivariant}. Then, we explain how the Maurer-Cartan simplicial set extends to the $G$-equi\-var\-iant setting in Section \ref{MCSimplicial}. In Section \ref{SullivanConjecture}, we prove the main result in this section (Theorem \ref{C}): If $G$ is a finite group and $L=L_{\geq 0}$ is a complete $G$-$L_\infty$-algebra over the rationals, then the  natural inclusion $\MC(L)^G\hookrightarrow \MC(L)^{hG}$ is a homotopy equivalence of Kan complexes. This is a special case of the so-called \emph{generalized Sullivan conjecture}, and in particular it implies that $$\pi_*\left( \MC(L)^{hG}\right) \cong \big(\pi_*\MC(L)\big)^G.$$ We provide $L_\infty$-models for $G$-spaces and mapping $G$-spaces in Section \ref{LieModels}. Rather than being complete algebraic models of the rational $G$-type,  these models are a weaker invariant. As an example, we explicitly compute a model for the equivariant map $\nabla$ governing the coalgebra structure of the $n$-fold suspension of a rational space in Section \ref{Cooperations}.

\subsection{Equivariant homotopy theory background}\label{Equivariant}

We will now recall some of the basics of equivariant homotopy theory, for more details see for example \cite{May96}. Let $G$ be a finite discrete group. A $G$-space is a CW-complex $X$ together with a continuous left $G$-action $G\times X \to X$ permuting the cells.  An equivariant map between $G$-spaces is a continuous map  $f:X\to Y$ which respects the $G$-action: $f(g\cdot x)=g\cdot f(x)$ for every $x\in X$ and $g\in G$. This defines the category of $G$-spaces. We  also work with the category   $\GsSet$ of $G$-simplicial sets, whose objects are simplicial sets $X_\bullet$  together with left actions $G\times X_n \to X_n$ on the $n$-simplices $X_n$ of $X_\bullet$ compatible with the face and degeneracy maps. A simplicial map is equivariant if it respects the $G$-actions. What we explain in this section for $G$-spaces works as well for $G$-simplicial sets, and vice versa. For brevity, we spell out the details only in one case. If $X$ and $Y$ are $G$-spaces, then the product  $X\times Y$ is a $G$-space with the diagonal action $g \cdot (x,y) = \left(g \cdot x, g \cdot y\right)$.  The set of equivariant maps between the $G$-spaces $X$ and $Y$, denoted by $\operatorname{Map}_G\left(X,Y\right)$, carries the subspace topology as a subset of the usual mapping space $\operatorname{Map}\left(X,Y\right)$ endowed with the compact-open topology.  The usual mapping space $\operatorname{Map}\left(X,Y\right)$ becomes a $G$-space with the \emph{conjugation action}:  For $g\in G$ and $f:X\to Y$,
\begin{equation*}
g\cdot f :X\to Y, \quad x \mapsto g \cdot \left(f \left(g^{-1} \cdot x\right) \right) \quad \textrm{ for every } x\in X.
\end{equation*} This left $G$-action on $\operatorname{Map}\left(X,Y\right)$ is adjoint to the product of $G$-spaces with the diagonal action: For any $G$-spaces $X,Y$ and $Z$, there is a natural bijection
\begin{equation}\label{Equivariant Tensor-Hom}
\Map\left(X\times Y,Z\right)\cong\Map\left(X,\Map\left(Y,Z\right)\right).
\end{equation}
For $G$-simplicial sets, we use the same notation, $\operatorname{Map}_G\left(X_\bullet, Y_\bullet\right)$ and $\operatorname{Map}\left(X_\bullet,Y_\bullet\right)$, for the corresponding simplicial mapping spaces. Recall that these mapping simplicial sets are \emph{not} given by the simplicial maps between $X_\bullet$ and $Y_\bullet$. Rather, these have $n$-simplices  
\begin{equation*}
\Map\left(X_\bullet,Y_\bullet\right)_n = \mathsf{sSet} \left(\Delta^n_\bullet \times X_\bullet, Y_\bullet\right), \quad \textrm{ and } \quad \Map_G\left(X_\bullet,Y_\bullet\right)_n = \mathsf{sSet}_G \left(\Delta^n_\bullet \times X_\bullet, Y_\bullet\right).
\end{equation*} The face and degeneracy maps on an $n$-simplex $f$ are given, respectively, by
\begin{equation*}
d_i\left(f\right) : \Delta_\bullet^{n-1} \times X_\bullet \to Y_\bullet, \quad d_i\left(f\right) = f \circ \left(\varepsilon_i\times \id\right), \quad \textrm{ and } \quad s_j\left(f\right) : \Delta_\bullet^{n+1} \times X_\bullet \to Y_\bullet, \quad s_j\left(f\right) = f \circ \left(\eta_j\times  \id\right), 
\end{equation*} where $\varepsilon_i: \Delta_\bullet^n\to \Delta_\bullet^{n-1}$ is the simplicial inclusion of the $i$-th face, and $s_j: \Delta_\bullet^n\to \Delta_\bullet^{n+1}$ is the simplicial  $j$-th projection.  Recall that a \emph{Kan complex} is a simplicial set satisfying the horn-fillers condition, and that if $Y_\bullet$ is a Kan complex, then for any simplicial set $X_\bullet$, the simplicial mapping space $\Map\left(X_\bullet,Y_\bullet\right)$ is also a Kan complex (\cite[Proposition 1.17]{Cur71}). Sometimes, we will omit the subscript from the notation of a simplicial set $X_\bullet$, and simply write $X$.\\

Let $X$ be a $G$-space. The \emph{fixed points} for the $G$-action is the subspace $X^G = \left\{ x\in X \mid g\cdot x = x \right\}$. This space can be identified with the equivariant mapping space $\Map_G \left(*,X\right)$. The homotopy theory of $G$-spaces is subtle, and there are several alternatives for equivalences, see \cite{Bre72,May96}. A homotopy equivalence of $G$-spaces is an equivariant map $f:X\to Y$ such that for every subgroup $H\leq G$, the induced map between $H$-fixed points $f^H:X^H \to Y^H$ is a weak homotopy equivalence. This condition is usually hard to check, and for us it is enough to use the following: An \emph{equivariant weak equivalence} is an equivariant map $f:X\to Y$ inducing an isomorphism on all homotopy groups for every choice of base point, and a bijection on $\pi_0$. \\

The fixed points for a given $G$-action on $X$ are not homotopically well behaved. This is solved by introducing the \emph{homotopy fixed points} as the space of equivariant maps $$X^{hG} = \Map_G \left(EG,X\right).$$ Here, $EG$ is the total space in the universal principal $G$-fibration $$G \hookrightarrow EG \to BG.$$ Recall that $EG$ is contractible and carries a free $G$-action, and that $BG=EG/G$ is the \emph{classifying space of $G$}.
An explicit simplicial model for $EG$ is described in \cite[6.14]{Cur71} (it is called $WG$ there). Since it will be used, we recall it next. The $n$-simplices of $EG$ are
\begin{equation*}
	 EG_n= G^{\times (n+1)}.
\end{equation*} The face and degeneracy maps are given by
\begin{align*}
	d_i \left(g_{n},...,g_0\right) &= \left\{ \begin{array}{lcl}
	\left(g_{n},...,g_1\right) & \mbox{ if } & i=0 \\
	\left(g_{n},...,g_{i+1},g_{i}\cdot g_{i-1}, g_{i-2},...,g_1\right) & \mbox{ if } & 1\leq i \leq n
	\end{array}
	\right. \\[0.2cm]
	s_j \left(g_{n},...,g_0\right) &= \left(g_{n},..., g_j,e,g_{j-1},...,g_0\right)  \quad  \textrm{if} \quad  0\leq j \leq n
\end{align*} Here, $e$ denotes the unit element of $G$. The group $G$ acts on the left on $EG$ by $$g \cdot (g_{n},...,g_0)=(g \cdot g_{n},g_{n-1},....,g_0).$$ This action is free, and it can be seen that the simplicial set $EG$ is contractible. The collapse map $EG \xrightarrow{\simeq} *$ induces an inclusion $X^G \hookrightarrow X^{hG}$ which is usually far from being a homotopy equivalence. For any $G$-space $X$, the homotopy fixed points are the usual fixed points of the conjugation action on the  mapping space: $X^{hG}=\Map\left(EG,X\right)^G$.

\subsection{The Maurer-Cartan $G$-simplicial set}\label{MCSimplicial}

The Maurer-Cartan simplicial set $\MC(L)$ of an $L_\infty$-algebra $L$ (\cite{Hin97A,Get09}) has been shown to be vitally important in rational homotopy theory, deformation theory, and other fields. In this section, we extend the Maurer-Cartan simplicial set construction to the $G$-equivariant setting, producing a $G$-simplicial set $\MCG(L)$ out of an $L_\infty$-algebra $L$ endowed with a $G$-action. Since the Maurer-Cartan simplicial set functor $\MC$ coincides with Sullivan's realization functor $\langle - \rangle$ (\cite{Sul77}) in the $1$-reduced finite type case, the functor $\MCG$ introduced here generalizes the equivariant version of Sullivan's functor $\left\langle -\right\rangle_G$ studied in \cite{Goy89} to arbitrary $G$-simplicial sets.\\  

An \emph{$L_\infty$-algebra} is an algebra over the Koszul resolution of the Lie operad, $\Omega\mathsf{Lie}^{¡}$. Equivalently, it is a graded vector space $L=\left\{L_n\right\}_{n\in \Z}$ together with skew-symmetric linear maps $\ell_k:L^{\otimes k}\to L$ of degree $k-2$, for $k\geq 1$, satisfying the \emph{generalized Jacobi identities} for every $n\geq 1$: $$\sum_{i+j=n+1} \sum_{\sigma \in S(i,n-i) } \chi(\sigma) (-1)^{i(j-1)} \ell_{j}\left(\ell_i\left(x_{\sigma(1)},...,x_{\sigma(i)}\right),x_{\sigma(i+1)},...,x_{\sigma(n)}\right) =0.$$

Here, $S(i,n-i)$ are the $(i,n-i)$ shuffles, given by those permutations $\sigma$ of $n$ elements such that $\sigma(1)<\cdots < \sigma(i)$ while $ \sigma(i+1)<\cdots < \sigma(n),$ and $\chi(\sigma)$ stands for the sign arising from the Koszul sign convention and the parity of $\sigma$. Recall that the \emph{lower central series} $\left\{\Gamma^i L\right\}_{i\geq 1}$ of an $L_\infty$  algebra $L$ with higher brackets $\{\ell_n\}$ is the intersection of all possible descending filtrations $$L=F^1 L \supseteq F^2 L \supseteq \cdots $$ such that for all $n\geq 1$, and $i_1,...,i_n$, 
\begin{equation}\label{Filtration}
	\ell_n\left(F^{i_1}L,...,F^{i_n}L\right)\subseteq F^{i_1+\cdots +i_n}L.
\end{equation}

An $L_\infty$-algebra $L$ is \emph{complete}, or \emph{pronilpotent}, if it is isomorphic to the inverse limit $$L\cong \varprojlim L/\Gamma^nL.$$

Let $\GL$ be the category of complete $L_\infty$-algebras $L=\left(L,\{ \ell_n\}\right)$ with a $G$-action $G\times L \to L$ of a group $G$ compatible with the brackets, 
\begin{equation}\label{ActionOnBrackets}
	g \cdot \ell_n\left(x_1,...,x_n\right) = \ell_n \left(gx_1,...,gx_n\right), 
\end{equation} together with filtered $L_\infty$-morphisms that commute with the $G$-action. That is, equation $(\ref{Filtration})$ holds, and if $f=\left\{f_n:L^{\otimes n}\to M \right\}$ is an $L_\infty$-morphism, then for every $n\geq 1$ and $g\in G$,
\begin{equation}\label{ActionOnMorphisms}
	g \cdot f_n\left(x_1,...,x_n\right) = f_n \left(gx_1 ,..., gx_n\right).
\end{equation} Such a $\GL$ is the same thing as a complete  $L_\infty$-algebra $L$ together with a group homomorphism $G\to \operatorname{Aut}_{{\mathcal L}_\infty-\textrm{alg}}\left(L\right)$. Here, we are denoting by ${\mathcal L}_\infty-\mbox{alg}$ the obvious category of $L_\infty$-algebras by forgetting the $G$-action. We refer to these as $G$-$L_\infty$-algebras. So defined, such $G$-$L_\infty$-algebras are inverse limits of towers of nilpotent $G$-$L_\infty$-algebras, whose fibers at each level are abelian $G$-$L_\infty$-algebras. All $L_\infty$-algebras in this paper are assumed to be complete.\\

The category $\GCDGA$ has objects the CDGA's endowed with a left $G$-action compatible with the product and differential ($G$ acts by automorphisms of CDGA's), and the morphisms are the equivariant CDGA maps, i.e, CDGA maps $f:A\to B$ such that $f(g\cdot x)= g\cdot f(x)$ for all $g\in G$ and $x\in A$. See \cite[Sec. 3.3]{Yve08}. \\

If $L\in \GL$ and $A\in \GCDGA$, then $L\widehat  \otimes A \in  \GL$ via the diagonal action $g \cdot (x\otimes a)=gx\otimes ga$. In particular, if $\Omega_n=A_{PL}(\Delta^n)$  are Sullivan's polynomial de Rham forms on the standard simplices \cite[Chp. 10]{Yve12}, and $\Omega_\bullet=\bigoplus_{n \geq 0} \Omega_n$,  then each of $L\widehat{\otimes }\Omega_n$ and  $L\widehat{\otimes }\Omega_\bullet$ are $G$-$\mathcal{L}_\infty$-algebras. Here and in what follows, we denote by $L\widehat \otimes A$ the completed tensor product $\varprojlim \left(L/\Gamma^nL\otimes A\right)$. \\

The \emph{Maurer-Cartan elements} of a complete $L_\infty$-algebra $L$, denoted by $MC(L)$, are the degree $-1$ elements $z$ satisfying the equation $$\sum_{k\geq 1} \frac{1}{k!} \ell_k\left(z,...,z\right)=0.$$ The \emph{Maurer-Cartan simplicial set} $\MC(L)$ of an $L_\infty$-algebra $L$ has $n$-simplices given by $$\operatorname{MC}_n\left(L\right) = MC\left(L\widehat{\otimes }\Omega_n \right),$$ and its faces and degeneracies are induced by those of $\Omega_\bullet.$ Remarkably, $\MC(L)$ is always a Kan complex (\cite{Get09}). The functor $\MCG:\GL \to \GsSet $ is defined as the usual $\MC$ functor on objects, $\MCG(L)=\MC(L)$, and it carries the $G$-action $$g\cdot z = \sum_i (gx_i)\otimes a_i,$$ for an $n$-simplex $z=\sum_i x_i\otimes a_i \in L\widehat{\otimes} \Omega_n$ and $g\in G$. Equation (\ref{ActionOnBrackets}) ensures that $g\cdot z\in \MCG(L).$ So defined on objects, the $G$-action equivariantly extends the definition of $\MC$ to morphisms. That is, $\MCG(f)=\MC(f)$ is given as usual by
\begin{equation*}
\begin{tikzcd}
L \arrow[d, "f"'] \arrow[r, maps to] & \MCG(L) \arrow[d, "{\MCG(f) \ :\  z \ \mapsto\ \sum_{k\geq 1} \frac{1}{k!} f_k(z,...,z)}"] \\
M \arrow[r, maps to]                 & \MCG(M)                                                                           
\end{tikzcd}
\end{equation*} The equivariance of $\MCG(f)$ follows from equation (\ref{ActionOnMorphisms}).\\

	Assume $L=L_{\geq 0}$ is a finite type degree-wise nilpotent $L_\infty$-algebra (see \cite[Definition 2.1]{Ber15}). Then, $L$ uniquely corresponds to a CDGA $\mathcal C^*\left(L\right)$  of the form $\left(\Lambda V,d\right)$, which is a Sullivan algebra with $V=\left(sL\right)^\vee$ and $d$ is such that its homogeneous components $d_k$ correspond to the higher brackets $\ell_k$ (\cite[Theorem 2.3]{Ber15}). The $G$-action passes through this correspondence via $$\left(g \cdot sx\right)^\vee = g^{-1} \cdot \left(sx\right)^\vee.$$ Thus, $\left(\Lambda V,d\right)$ is a $\GCDGA.$ In \cite{Goy89}, Sullivan's classical adjoint pair between CDGA's and simplicial sets was extended to include a finite group action. Restricting to finite type 1-reduced simplicial sets, there is an adjoint pair $$\langle - \rangle_G \ :\  \GCDGA\leftrightarrows \GsSet\ :\  A_{PL}^G.$$ The Maurer-Cartan simplicial set $\MC$ is naturally equivalent to Sullivan's realization in the $1$-reduced finite type case (\cite[Proposition 1.1]{Get09}), and it can be seen that the $G$-action passes through. Thus, there is a natural isomorphism of simplicial sets which is $G$-equivariant, $$\MCG\left(L\right)\cong \left\langle \mathcal C^*\left(L\right)\right\rangle_G.$$ 

In the non-equivariant case the homotopy groups of $\MC(L)$ were computed by Berglund (see \cite[Theorem 1.1]{Ber15}), he showed that for a complete $L_\infty$-algebra $L$ the homotopy groups based at a Maurer-Cartan element $\tau$ are given by 
\[
\pi_{*+1}(\MC(L),\tau)\cong H_{*}(L^{\tau}),
\]
where $L^{\tau}$ denotes the $L_\infty$-algebra $L$ twisted by the Maurer-Cartan element $\tau$. The group structure on $H_0(L^{\tau})$ is given by the Baker-Campbell-Hausdorff formula. We need Berglund's result to compute the homotopy groups of the homotopy fixed points of certain mapping spaces associated to the little disks coalgebra structure from Section 2.

\subsection{The Sullivan conjecture for the Maurer-Cartan $G$-simplicial set}\label{SullivanConjecture}

Given a $G$-space or simplicial set $X$, determining whether the natural  inclusion $X^G\hookrightarrow X^{hG}$ is a homotopy equivalence after completing at a given prime $p$  is a difficult problem. Its answer in the affirmative goes under the name of the \emph{generalized Sullivan conjecture}, see \cite{Mil84}. In this section, we prove Theorem \ref{C} (as Theorem \ref{HomEq}), asserting that whenever $G$ is a finite group, $p=0$ and $X$ is of the form $\MC(L)$ for a complete $G$-$L_\infty$-algebra $L=L_{\geq 0}$, the generalized Sullivan conjecture holds. This was proven in  \cite{Goy89} under the hypothesis that $X=\left\langle \Lambda V,d \right\rangle$ is the Sullivan realization of a simply-connected finite type Sullivan $G$-algebra. Our approach is different and conceptually simpler in the sense that we work simplicially and do not rely on the Federer-Schultz spectral sequence. The non-discrete rational case for not necessarily simply-connected spaces has been studied in \cite{Bui09,Bui15}.\\

Let $L\in \GL$. The \emph{fixed points of $L$} are 
\begin{equation*}
L^G = \left\{ x \in L \mid gx = x \ \ \forall g \in G \ \right\} \subseteq L.
\end{equation*} From equation (\ref{ActionOnBrackets}), it follows that $L^G$ is an $L_\infty$-subalgebra of $L$. If $X$ and $Y$ are $G$-simplicial sets, and $X$ is a Kan complex, then the proof of \cite[Prop. 1.17]{Cur71} goes through to show that $\Map_G\left(X,Y\right)$ is a Kan complex. Thus, for any $G$-$L_\infty$-algebra $L$, the simplicial sets $\MC(L)^G$ and $\MCG(L)^{hG}$ are Kan complexes.

\begin{remark}\label{Remark1}
	For any finite group $G$ and $G$-$L_\infty$-algebra $L$, it is straightforward to check that $\MC(L^G)\cong \MCG(L)^{G}$. In this case, we drop the superscript $G$ in $\MCG$ and simply write $\MC(L)^G$. Similarly, we write $\MC(L)^{hG}$ for $\MCG(L)^{hG}$. 
\end{remark}

The main result in this section is the following. 

\begin{theorem}\label{HomEq}
	Let $G$ be a finite group and let $L=L_{\geq 0}$ be a complete $G$-$L_\infty$-algebra, then the natural inclusion 
	\begin{equation*}
	\MC \left(L\right)^G \hookrightarrow \MC\left(L\right)^{hG}
	\end{equation*} is a homotopy equivalence of Kan complexes. 
\end{theorem}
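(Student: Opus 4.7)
The strategy is to reduce the statement to the case of an abelian $G$-$L_\infty$-algebra using the lower central series of $L$, and in that case to invoke Maschke's theorem.

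Since $L$ is complete, $L \cong \varprojlim_n L_n$ where $L_n := L/\Gamma^{n+1}L$ is a nilpotent $G$-$L_\infty$-algebra and $L_0 = 0$. For every $n \geq 1$ one has a short exact sequence of $G$-$L_\infty$-algebras
\begin{equation*}
0 \to K_n \to L_n \to L_{n-1} \to 0,
\end{equation*}
whose kernel $K_n := \Gamma^n L / \Gamma^{n+1}L$ is \emph{abelian} because, by (\ref{Filtration}), every $\ell_k$ with $k\geq 2$ sends $(\Gamma^n L)^{\otimes k}$ into $\Gamma^{nk} L \subseteq \Gamma^{n+1} L$. Applying $\MC$ yields a tower of equivariant Kan fibrations $\MC(L_n) \to \MC(L_{n-1})$ whose fibre over the basepoint is $\MC(K_n)$.

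In the abelian base case, let $C$ be a $\Q$-chain complex with $G$-action regarded as an abelian $G$-$L_\infty$-algebra. Then $\MC(C)$ is a simplicial $\Q[G]$-module and $\MC(C)^G = \MC(C^G)$. Since $G$ is finite and $\mathrm{char}\,\Q=0$, Maschke's theorem implies $\Q[G]$ is semisimple, so $H^p(G;V)=0$ for every $\Q[G]$-module $V$ and $p>0$. The Bousfield-Kan homotopy fixed point spectral sequence for the simplicial $\Q[G]$-module $\MC(C)$ therefore collapses to give $\pi_q(\MC(C)^{hG}) \cong \pi_q(\MC(C))^G$. Combining this with Berglund's formula and exactness of $G$-invariants over $\Q$, one has $\pi_q(\MC(C^G)) = H_{q-1}(C)^G = \pi_q(\MC(C))^G$, so the inclusion $\MC(C)^G \hookrightarrow \MC(C)^{hG}$ induces isomorphisms on all homotopy groups and is thus a homotopy equivalence of Kan complexes.

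For the inductive step, assume the equivalence holds for $L_{n-1}$. Applying $(-)^G$ and $(-)^{hG} = \Map_G(EG,-)$ to the fibre sequence $\MC(K_n) \to \MC(L_n) \to \MC(L_{n-1})$ preserves the Kan fibration structure: for the fixed points because equivariant lifting problems split under the semisimplicity of $\Q[G]$, and for the homotopy fixed points because $EG$ is a free, hence cofibrant, $G$-simplicial set. The natural inclusions $(-)^G \hookrightarrow (-)^{hG}$ assemble into a commutative ladder of fibrations, and a five-lemma argument on the associated long exact sequences of homotopy groups, using the inductive hypothesis for $L_{n-1}$ and the abelian base case for $K_n$, yields the equivalence for $L_n$. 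Passing to the inverse limit is innocuous: $\MC$, $(-)^G$ and $\Map_G(EG,-)$ all preserve limits, and the tower is Mittag-Leffler because all transition maps are surjective.

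The principal obstacle is verifying these fibration and fixed-point compatibilities rigorously: namely, that surjections of complete $G$-$L_\infty$-algebras really do produce equivariant Kan fibrations on $\MC$, and that the five-lemma applies uniformly through the possibly non-abelian $\pi_1$-stratum. Both issues are ultimately handled by unwinding the horn-filling argument of \cite{Get09} equivariantly, and by using that, even when non-abelian, $\pi_1(\MC(L),0) = H_0(L)$ is a $\Q[G]$-module whose BCH group structure respects the $G$-action.
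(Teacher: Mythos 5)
Your proposal is correct in outline and follows the same skeleton as the paper's proof: reduce along the lower central series tower, whose layers $\Gamma^nL/\Gamma^{n+1}L$ are abelian, apply $\MC$ to get a tower of equivariant fibrations, compare fixed and homotopy fixed points fibrewise, run the five lemma, and pass to the (Mittag-Leffler) inverse limit. Where you genuinely diverge is in the abelian base case, which is the heart of the argument. The paper does not use any spectral sequence there: for abelian $L$ it writes down an explicit averaging retraction $p:\MC(L)^{hG}\to\MC(L)^G$, $f\mapsto \frac{1}{|G|}\sum_{g}f(-,(g,e,\dots,e))$, and an explicit $G$-simplicial homotopy on $EG$ between any equivariant map and its averaged symmetrization, so the inclusion is exhibited as a simplicial deformation retract. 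Your route instead computes $\pi_q(\MC(C)^{hG})\cong\pi_q(\MC(C))^G$ via the homotopy fixed point spectral sequence and Maschke's theorem, and matches this with $\pi_q(\MC(C^G))\cong H_{q-1}(C)^G$ via Berglund's formula and exactness of invariants in characteristic zero. This is legitimate and shorter to state, but it buys less: it only yields isomorphisms on homotopy groups (sufficient for Kan complexes, but not an explicit retraction), and it imports convergence and fringe questions for the spectral sequence, which you should explicitly dispose of by noting that $\MC(C)$ is a connected simplicial $\Q$-vector space (since $C=C_{\geq 0}$), so the relevant tower of function complexes consists of simplicial abelian groups and the collapse at $E_2$ gives convergence with no $\lim^1$ or basepoint issues. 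You should also make explicit the naturality check that the isomorphism you produce is induced by the inclusion $\MC(C)^G\hookrightarrow\MC(C)^{hG}$: compose with the evaluation $\MC(C)^{hG}\to\MC(C)$ at a vertex of $EG$; the composite is the inclusion of fixed points, which hits exactly the invariants in $\pi_q$, and the edge homomorphism identifies the second map with projection onto those invariants, so the first map is an isomorphism. Finally, your justification that fixed points of the tower fibrations are again fibrations ("equivariant lifting problems split by semisimplicity") is vaguer than needed; the cleaner statement, consistent with the paper, is that $(L/F^nL)^G\to(L/F^{n-1}L)^G$ is still a surjection of nilpotent $L_\infty$-algebras (lift and average), hence induces a Kan fibration on $\MC$, while for homotopy fixed points one uses that $\Map_G(EG,-)$ is a right adjoint (or that $EG$ is cofibrant, as you say).
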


\begin{proof}[Proof of Theorem \ref{HomEq}]
	We use a standard strategy for complete $L_\infty$-algebras (see for example \cite{Get09,Dol15,Ber15}). First we prove the statement for abelian $G$-$L_\infty$-algebras, and then for a complete $G$-$L_\infty$-algebras $L$ by using induction on the tower of fibrations of nilpotent $G$-$L_\infty$-algebras of which $L$ is the inverse limit,
	\begin{center}
			\begin{tikzcd}
				\cdots \arrow[r, two heads] & L/F^nL \arrow[r, two heads] & \cdots \arrow[r, two heads] & L/F^3L \arrow[r, two heads] & L/F^2L.
			\end{tikzcd}
	\end{center}

	{\noindent \em Step 1: The abelian case.} Let $L$ be abelian. Define $p:\MC\left(L\right)^{hG} \to \MC \left(L\right)^G$ on non-degenerate simplices $f\in \mc_n^{hG}=\sSet_G\left(\Delta^n\times EG, \MC\left(L\right)\right)$ as
	\begin{equation*}
		\begin{tikzcd}[row sep=tiny]
		p(f)=f^\Sigma: \Delta^n\times * \ar{r}& \MC\left(L\right)\\
		(\tau, x) \ar[mapsto]{r} & \frac{1}{|G|} \sum_{g\in G}f\left(\tau,\left(g,e,...,e\right)\right).
		\end{tikzcd}
	\end{equation*} Since $L$ is abelian and $G$ is finite, each $f\left(\tau,\left(g,e,...,e\right)\right)$ is indeed a Maurer-Cartan element, and each averaged sum is so as well. Thus, $p$ is well-defined. It is straightforward to check that $p$ is simplicial. Denote by $i$ the natural inclusion $\MC \left(L\right)^G \hookrightarrow \MC\left(L\right)^{hG}$ induced by the collapse map $\pi:EG\xrightarrow{\simeq} *$. A straightforward check gives that $pi$ is the identity on $\MC \left(L\right)^G$ and that that $ip$ is the averaged symmetrization, that is, for any $f\in \mc_n\left(L\right)^{hG}$, $ip(f):\Delta^n\times EG \to \MC\left(L\right)$ is given by $$\left(\tau, g_m,...,g_0\right) \mapsto \frac{1}{|G|}\sum_{g\in G} f(\tau, (g,e,...,e)).$$
	
	We give a simplicial $G$-equivariant homotopy between the identity of $\MC\left(L\right)^{hG}$ and $ip.$ This is a $G$-simplicial map $K:\MC\left(L\right)^{hG}\times \Delta^1  \to \MC\left(L\right)^{hG}$ such that $$K\left(-,0\right)=ip \quad \textrm{and} \quad K\left(-,1\right)=\id.$$
	
	To do so, we first show that given any $G$-simplicial map $f:EG \to \MC(L)$, it is $G$-equivariantly homotopic to its averaged symmetrization $$f^\Sigma \equiv f^\Sigma \circ \pi: EG \xrightarrow{\simeq} * \xrightarrow{f^\Sigma} \MC(L).$$ Recall that $f^\Sigma:*\to \MC(L)$ is given by mapping any $p$-simplex as follows: $$x \in * \mapsto \frac{1}{|G|}\sum_{\sigma\in G} f\left(\sigma,e,...,e\right).$$ We therefore give a $G$-simplicial map $H:EG \times \Delta^1  \to \MC(L)$ with $$H\left(-,0\right)=f^\Sigma \quad \textrm{and} \quad H\left(-,1\right)=f.$$ Explicitly, 
	\begin{equation*}
		H\left((g_m,...,g_0), [0,\overset{(p)}{ \ ...\  },0,1,\overset{(m+1-p)}{ \ ...\  },1]\right) = \frac{1}{|G|} \sum_{\sigma \in G} f \left(g_m,...,g_{p},\sigma,e,...,e\right).
	\end{equation*} In the formula above, as comes forced by the top and bottom of the cylinder conditions, we understand that for $p=m$, $$ H\left((g_m,...,g_1), [0,...,0]\right) = \frac{1}{|G|} \sum_{\sigma \in G} f \left(\sigma,e,...,e\right), $$ and that for $p=0$, $$ H\left((g_m,...,g_1), [1,...,1]\right) = f \left(g_m,...,g_1\right).$$ This is a $G$-simplicial map. Indeed, for $d_0$ we have: 
	\begin{align*}
		d_0H((g_m,...,g_0), &[0,\overset{(p)}{ \ ...\  },0,1,\overset{(m+1-p)}{ \ ...\  },1])=d_0 \left(\frac{1}{|G|} \sum_{\sigma \in G} f \left(g_m,...,g_{p},\sigma,e,...,e\right)\right) \\ &= \frac{1}{|G|}\sum_{\sigma \in G}  d_0f \left(g_m,...,g_{p},\sigma,e,...,e\right)= \frac{1}{|G|}\sum_{\sigma \in G}  fd_0 \left(g_m,...,g_{p},\sigma,e,...,e\right)\\
		& = \frac{1}{|G|}\sum_{\sigma \in G}  f \left(g_m,...,g_{p-1},\sigma,e,...,e\right) = Hd_0(g_m,...,g_1), [0,\overset{(p)}{ \ ...\  },0,1,\overset{(m+1-p)}{ \ ...\  },1]).
	\end{align*} For $d_i$, with $0<i\leq p$, we similarly have:
	\begin{center}
		\begin{tikzcd}
			{(g_m,...,g_0), [0,\overset{(p)}{ \ ...\  },0,1,\overset{(m+1-p)}{ \ ...\  },1]} \arrow[rr, "H"] \arrow[d, "d_i"']             &  & {\frac{1}{|G|} \sum_{\sigma \in G} f \left(g_m,...,g_{p},\sigma,e,...,e\right)} \arrow[d, "d_i"] \\
			{(g_m,...,g_{i+1}\cdot g_{i},...,g_0), [0,\overset{(p-1)}{ \ ...\  },0,1,\overset{(m+1-p)}{ \ ...\  },1]} \arrow[rr, "H"] &  & {\frac{1}{|G|} \sum_{\sigma \in G} f \left(g_m,...,g_{i+1}\cdot g_{i},...,g_p,\sigma,e,...,e\right)}                 
		\end{tikzcd}
	\end{center} For $d_i$  with $i> p$, we have a diagram like the one above where the only difference is that the down-left corner has $p$ zeroes and $(m-p)$ ones. Similarly, one checks degeneracies. It is immediate that the $G$-action goes through, since $G$ acts on the left.
	
	 Finally, the homotopy $K$ between $ip$ and $\id$ in $\MC(L)^{hG}$ is explicitly given on $n$-simplices by:
	
	\begin{equation*}
\begin{tikzcd}
{K_n:\mathsf{sSet}_G\left(\Delta^n \times EG, \MC(L)\right) \times \Delta^1_n} \arrow[r] & {\mathsf{sSet}_G\left(\Delta^n \times EG, \MC(L)\right)} &                                                      \\
{(f,\tau)} \arrow[r, maps to]                                                                     & \Delta^n \times EG \arrow[r]                                 & \MC(L)                                                \\
& {\left(\sigma,g_m,...,g_0\right)} \arrow[r, maps to]                  & {H\left(f\left(\sigma,-\right)\right) (g_m,...,g_0)}
\end{tikzcd}
	\end{equation*}
	
	Note that $K$ is $G$-simplicial by construction. \\
	
	{\noindent \em Step 2: Induction.} Assume that, for $L$ abelian, the inclusion $\MC\left(L\right)^G \hookrightarrow \MC\left(L\right)^{hG}$ is a homotopy equivalence. Let $L$ be a complete $G$-$L_\infty$-algebra. Since the $G$-action preserves the filtration, such an $L_\infty$-algebra $L$ is the inverse limit of a tower of principal fibrations of nilpotent $G$-$L_\infty$-algebras, whose fibers are abelian $G$-$L_\infty$-algebras:
	\begin{center}
\begin{tikzcd}
	\cdots \arrow[r, two heads] & L/F^nL \arrow[r, two heads]   & \cdots \arrow[r, two heads] & L/F^4L \arrow[r, two heads] & L/F^3L \arrow[r, two heads] & L/F^2L \\
	& F^nL/F^{n-1}L \arrow[u, hook] &                             & F^4L/F^3L \arrow[u, hook]   & F^3L/F^2L \arrow[u, hook]   &       
\end{tikzcd}
	\end{center} Note that $L/F^2L$ is abelian as well. Disregarding the $G$-action, each such a fibration 
	
	\begin{center}
		\begin{tikzcd}
			F^nL/F^{n-1}L \arrow[r, hook] & L/F^nL \arrow[d, two heads] \\
			& L/F^{n-1}                  
		\end{tikzcd}
	\end{center}
	 maps to a fibration of simplicial sets (see for example \cite{Get09} or \cite{Dol15})
	\begin{center}
			\begin{tikzcd}
				\MC \left(F^nL/F^{n-1}L\right) \arrow[r, hook] & \MC \left(L/F^nL\right) \arrow[d, two heads] \\
				& \MC \left(L/F^{n-1}L\right).                 
			\end{tikzcd}
	\end{center} The fibration above is of $G$-spaces, and taking fixed or homotopy fixed points in it yields again a fiber sequence. Indeed: The functor  $\Map_G\left(EG,-\right)$ is naturally equivalent to the compositon of right adjoints $(-)^G \circ \Map\left(EG,-\right)$, thus it is a right adjoint as well. Fibrations are a limit, and right adjoints preserve limits.	Therefore, we can compare fibers of fixed and homotopy fixed points: For each $n$, there is a diagram
	\begin{center}
\begin{tikzcd}
	\MC\left(F^nL/F^{n-1}L\right)^G \arrow[r, hook] \arrow[rddd, "\simeq", dashed] & \MC \left(L/F^nL\right)^G \arrow[d, two heads] \arrow[rddd, bend left, "h", dashed] &                                                   \\
	& \MC \left(L/F^{n-1}L\right)^G \arrow[rddd, "\simeq", dashed]             &                                                   \\
	&                                                                          &                                                   \\
	& \MC\left(F^nL/F^{n-1}L\right)^{hG} \arrow[r, hook] \arrow[r]               & \MC \left(L/F^nL\right)^{hG} \arrow[d, two heads] \\
	&                                                                          & \MC \left(L/F^{n-1}L\right)^{hG}                 
\end{tikzcd}
	\end{center} 
	Since the above is a commuting diagram of fibrations, there is a morphism between the induced long exact sequences in homotopy groups.

\begin{center}
\begin{tikzcd}
 \vdots \arrow[d] & \vdots \arrow[d] \\
 \pi_k\left(\MC\left(F^nL/F^{n-1}L\right)^G\right) \arrow[r] \arrow[d]&\pi_k\left(\MC\left(F^nL/F^{n-1}L\right)^{hG}\right) \arrow[d] \\
\pi_k\left(\MC \left(L/F^nL\right)^G \right) \arrow[d]\arrow[r] &  \pi_k\left(\MC \left(L/F^nL\right)^{hG} \right) \arrow[d] \\
 \pi_k\left( \MC \left(L/F^{n-1}L\right)^G \right) \arrow[d]\arrow[r] & \pi_k\left( \MC \left(L/F^{n-1}L\right)^{hG} \right) \arrow[d] \\
 \vdots & \vdots
 \end{tikzcd}
\end{center}

	For abelian $G$-$L_\infty$-algebras, Step $1$ of the proof gives a homotopy equivalence between the fixed points and homotopy fixed points. Therefore, the map  
\begin{equation*}
	 \pi_k\left(\MC\left(F^nL/F^{n-1}L\right)^{G}\right)\rightarrow \pi_k\left(\MC\left(F^nL/F^{n-1}L\right)^{hG}\right)
\end{equation*} is an isomorphism for all $k\geq 1$. If we  show that the map 
	\[
	 \pi_k\left( \MC \left(L/F^{n-1}L\right)^{G} \right)\rightarrow \pi_k\left( \MC \left(L/F^{n-1}L\right)^{hG} \right)
	\]
is also an isomorphism for all $k \geq 1$, then it follows from the Five Lemma (see for example \cite[Lemma 3.1]{Yve12}) that the induced map 
\[
	 \pi_k\left( \MC \left(L/F^{n}L\right)^{G} \right)\rightarrow \pi_k\left( \MC \left(L/F^{n}L\right)^{hG} \right)
\]	
is also an isomorphism for all $k \geq 1$. So by induction it follows that $\left(L/F^nL\right)^G$  is homotopy equivalent to $\left(L/F^nL\right)^{hG}$. Therefore, the inverse limits of the corresponding towers are also weakly equivalent (see \cite[Chapter VI]{Goe99}). Hence, the result is proven.	 
\end{proof}

The following consequence extends another of the main results of \cite{Goy89} from simply-connected finite type to connected spaces. 

\begin{corollary}\label{Homotopy Groups}
	Let $G$ be a finite group and $L=L_{\geq 0}$ be a complete $G$-$L_\infty$-algebra. Then, $$\pi_*\left( \MC(L)^{hG}\right) \cong \big(\pi_*\MC(L)\big)^G.$$
\end{corollary}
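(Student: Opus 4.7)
The plan is to chain together Theorem \ref{HomEq}, the identification $\MC(L)^G = \MC(L^G)$ from Remark \ref{Remark1}, Berglund's computation of the homotopy groups of a Maurer--Cartan simplicial set, and an averaging argument using that $G$ is finite and we work over $\Q$. First, Theorem \ref{HomEq} gives that $\MC(L)^G \hookrightarrow \MC(L)^{hG}$ is a homotopy equivalence of Kan complexes, so it induces isomorphisms on all homotopy groups based at any $G$-invariant basepoint (for instance the zero element $0 \in L^G$, which is always Maurer--Cartan). Combined with Remark \ref{Remark1}, this reduces the statement to showing that $\pi_*\big(\MC(L^G)\big) \cong \big(\pi_*\MC(L)\big)^G$.

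Next, fix a $G$-invariant Maurer--Cartan element $\tau \in L^G$ as basepoint. By Berglund's computation (\cite[Theorem 1.1]{Ber15}) recalled at the end of Section \ref{MCSimplicial},
\begin{equation*}
\pi_{n+1}\big(\MC(L),\tau\big) \;\cong\; H_n(L^\tau), \qquad \pi_{n+1}\big(\MC(L^G),\tau\big) \;\cong\; H_n\big((L^G)^\tau\big).
\end{equation*}
Because $\tau$ is $G$-invariant, the twisted higher brackets $\ell_k^\tau(x_1,\ldots,x_k) = \sum_{j\geq 0} \tfrac{1}{j!}\ell_{k+j}(\tau,\ldots,\tau,x_1,\ldots,x_k)$ remain $G$-equivariant, so twisting commutes with taking fixed points, giving an identification $(L^G)^\tau = (L^\tau)^G$ of $L_\infty$-algebras (and in particular of underlying chain complexes).

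The final ingredient is to commute homology past fixed points. Since $G$ is finite and the ground field $\Q$ has characteristic zero, the averaging map
\begin{equation*}
e : L^\tau \;\longrightarrow\; (L^\tau)^G, \qquad e(x) = \frac{1}{|G|}\sum_{g\in G} g \cdot x,
\end{equation*}
is a chain-map projection onto the $G$-invariants. Therefore the inclusion $(L^\tau)^G \hookrightarrow L^\tau$ is a direct summand of complexes, and $H_n\big((L^\tau)^G\big) \cong H_n(L^\tau)^G$. Stringing together the four isomorphisms yields
\begin{equation*}
\pi_{n+1}\big(\MC(L)^{hG},\tau\big) \;\cong\; \pi_{n+1}\big(\MC(L^G),\tau\big) \;\cong\; H_n\big((L^\tau)^G\big) \;\cong\; H_n(L^\tau)^G \;\cong\; \pi_{n+1}\big(\MC(L),\tau\big)^G,
\end{equation*}
which is the claim for $n \geq 0$. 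For the set-level statement $\pi_0\big(\MC(L)^{hG}\big) \cong \pi_0\big(\MC(L)\big)^G$, Theorem \ref{HomEq} already identifies $\pi_0$ of the homotopy fixed points with $\pi_0\big(\MC(L^G)\big)$, and the same averaging argument applied to $0$-simplices and path components shows that every $G$-fixed component of $\MC(L)$ contains a $G$-invariant representative.

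The only delicate points are essentially bookkeeping: checking that Berglund's twisting is $G$-equivariant (which is immediate from $\tau \in L^G$ and equation \eqref{ActionOnBrackets}) and that averaging yields a chain projection, which is where finiteness of $G$ and characteristic zero are both essential. I expect no serious obstacle beyond these verifications; the main conceptual input is already packaged in Theorem \ref{HomEq}.
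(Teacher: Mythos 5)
Your argument is correct and follows essentially the same route as the paper's proof: Theorem \ref{HomEq}, the identification $\MC(L)^G \cong \MC(L^G)$ from Remark \ref{Remark1}, Berglund's computation $\pi_{*+1}(\MC(L),\tau)\cong H_*(L^\tau)$ applied on both sides, and the fact that over $\Q$ taking $G$-invariants of a finite group commutes with homology, which you prove by the explicit averaging projection while the paper simply invokes Lemma \ref{Lema1}(2). The only remark worth making is that since $L=L_{\geq 0}$ the simplicial set $\MC(L)$ has the zero element as its unique vertex, so your extra care about general $G$-invariant basepoints $\tau$ and about $\pi_0$ (where averaging Maurer--Cartan elements would not work in a nonabelian algebra) is automatically vacuous here.
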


\begin{proof} To prove the corollary we apply in the order given the isomorphisms of Theorem \ref{HomEq}, Remark \ref{Remark1},  \cite[Theorem 1.1]{Ber14}, Lemma \ref{Lema1} 2., and again \cite[Theorem 1.1]{Ber14} to get the following chain of isomorphisms:
	\begin{equation*}
		\pi_*\left(\MC(L)^{hG}\right) \ \cong \ \pi_* \left(\MC\left(L\right)^{G}\right) \ \cong \ \pi_* \left(\MC\left(L^{G}\right)\right)\ \cong \ H_{*-1} \left(L^G\right) \ \cong \ H_{*-1}\left(L\right)^G \ \cong \ \big(\pi_*\MC(L)\big)^G,
			\end{equation*}
which prove the corollary.
\end{proof}

\begin{remarks} 
	\begin{enumerate}		
		\item Theorem \ref{HomEq}, nor the main result of \cite{Goy89} assuming the corresponding extra hypothesis on $X$, prove that for $G$ finite and $X$ a rational space, the inclusion $X^G \hookrightarrow X^{hG}$ is a homotopy equivalence. Indeed, even though any rational space $X$ is of the (weak) homotopy type of the spatial realization of $\MC(L)$ for some $L_\infty$-algebra $L$, the identification typically does not respect the $G$-action. For instance, $X$ can have a free $G$-action, but any associated $\MC(L)$ always has the zero element as a fixed point. One should be careful with choosing the correct notion of the rationalization of a $G$-space, which is outside the scope of this work.
		\item If $X$ is a simply-connected finite type complex, then Theorem \ref{HomEq} follows simply by combining the identification of simplicial sets $\left\langle \mathcal C^{*}(L)\right\rangle \cong \MC\left(L\otimes \Omega_*\right)$ (see \cite[Proposition 1.1]{Get09}) with Goyo's main result.
	\end{enumerate}
\end{remarks}

\subsection{Rational models for fixed and homotopy fixed points}\label{LieModels}

Let $X$ and $Y$ be $G$-spaces or simplicial sets, with $X$ connected and $Y$ of finite $ \Q$-type. Under certain connectivity assumptions, we give in this section $G$-$L_\infty$-models for the mapping space $\Map\left(X,Y_\Q\right)$ with the conjugation action and for $\Map\left(X,Y_\Q\right)^{hG}$ the homotopy fixed points of this mapping space.  \\ 

Let $X$ be a $G$-space or simplicial set. A  model of the $G$-homotopy-type of $X$ should be a functor from the homotopy orbit category $\mathcal O_G$ of $G$ to  the category $\GL$. This is outside of the scope of this article. We will be concerned with weaker algebraic models that are rational invariants of the weaker notion of $G$-type of $X$, where we use weak equivalences of spaces which are $G$-equivariant  as our weak equivalences. In what follows, we work with $G$-simplicial sets, everything works as well for $G$-spaces. 

A \emph{$G$-$L_\infty$-model} of a $G$-simplicial set $X$ is a $G$-$L_\infty$-algebra $L$ for which there is a zig-zag of equivariant rational equivalences connecting $X$ and $\MCG\left(L\right)$, 
\begin{equation*}
\begin{tikzcd}
X & \cdots \arrow[l, "\simeq"'] \arrow[r, "\simeq "] & \MCG\left(L\right).
\end{tikzcd}
\end{equation*}

Recall (\cite{Bou76,Sul77}) that there is a contravariant adjunction between simplicial sets and rational CDGA's given by Sullivan's piece-wise polynomial de Rham forms, $\Omega : \mathsf{sSet} \leftrightarrows \mathsf{CDGA} : \left\langle-\right\rangle$. In this adjunction, $\Omega\left(X\right)=\mathsf{sSet}\left(X,\Omega_\bullet\right).$ In \cite{Goy89}, Goyo studied the natural adjunction arising by merging in the obvious way a finite group $G$,  $$\Omega_G : \GsSet \leftrightarrows \GCDGA : \left\langle-\right\rangle_G.$$

By abuse of notation, we drop the subscript $G$ and denote by $\Omega\left(X\right)$ the $G$-CDGA arising above. Recall that for $G$-simplicial sets $X$ and $Y$, $\Map\left(X,Y\right)$ is a $G$-simplicial set with the conjugation action. The following result extends \cite[Theorem 6.6]{Ber15} to the $G$-equivariant setting.

\begin{theorem}\label{Model of Map+Conjugation}  Let $G$ be a finite group. If $X$ is a $G$-simplicial set and $L=L_{\geq0}$ is a complete $G$-$L_\infty$-algebra, then there is a natural homotopy equivalence of Kan complexes which is $G$-equivariant
	\begin{equation}\label{mapa}
	\varphi : \MCG\left(\Omega\left(X\right)\widehat{\otimes} L\right) \xrightarrow{\simeq} \Map\left(X,\MCG\left(L\right)\right).
	\end{equation} 	
\end{theorem}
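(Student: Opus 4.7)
The plan is to take Berglund's non-equivariant equivalence from \cite[Theorem 6.6]{Ber15} as a black box and verify that it automatically intertwines the natural $G$-actions on both sides. Since the statement only asks for a homotopy equivalence of Kan complexes that happens to be $G$-equivariant (and not for an equivariant homotopy equivalence with an equivariant inverse), and since the underlying map is already known to be a weak equivalence of Kan complexes non-equivariantly, the entire content of the proof reduces to a careful check of equivariance.

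First I would recall Berglund's construction explicitly. An $n$-simplex of the source, i.e.\ a Maurer-Cartan element $\tau \in \MC(\Omega(X) \widehat{\otimes} L \widehat{\otimes} \Omega_n)$, is sent to the simplicial map $\Delta^n \times X \to \MCG(L)$ that on a $k$-simplex $(\alpha,\sigma)$ with $\sigma \in X_k$ and $\alpha:\Delta^k \to \Delta^n$ returns
\begin{equation*}
\varphi(\tau)(\alpha,\sigma) \;=\; (\sigma^* \otimes \id_L \otimes \alpha^*)(\tau) \;\in\; \MC(L \widehat{\otimes} \Omega_k),
\end{equation*}
where $\sigma^*:\Omega(X) \to \Omega_k$ and $\alpha^*:\Omega_n \to \Omega_k$ are the pullbacks. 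That $\varphi$ is a homotopy equivalence of Kan complexes between the underlying (non-equivariant) simplicial sets is exactly Berglund's theorem, so this part need not be re-proven.

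Second, I would equip both sides with their $G$-actions: the source carries the diagonal action $g\cdot(a \otimes x \otimes \omega) = (g\cdot a)\otimes(g\cdot x)\otimes\omega$, which preserves the Maurer-Cartan locus because the brackets are $G$-equivariant by equation (\ref{ActionOnBrackets}); the target carries the conjugation action, so $g\cdot f$ evaluated at $(\alpha,\sigma)$ equals $g\cdot f(\alpha, g^{-1}\sigma)$. The verification of equivariance then boils down to the identity $(g^{-1}\sigma)^* = \sigma^* \circ g^*$ on $\Omega(X)$, where $g^*$ is the contravariant $G$-action on forms that defines the $\GCDGA$-structure of $\Omega(X)$. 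Combining this with the $G$-equivariance of the brackets of $L$ (which turns the action on the $L$-component into an action on the output Maurer-Cartan element), a direct tensor-factor chase shows $\varphi(g\cdot \tau)(\alpha,\sigma) = g\cdot\varphi(\tau)(\alpha,g^{-1}\sigma)$, which is precisely the equivariance condition.

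The main obstacle is purely bookkeeping: one has to pin down compatible conventions for the left $G$-action on $X$, the induced contravariant left action on $\Omega(X)$, the diagonal action on the completed tensor $\Omega(X)\widehat{\otimes} L$, and the conjugation action on simplicial mapping spaces, so that the various pullbacks and evaluations line up on the nose. Once these conventions are fixed, equivariance is formal, and the homotopy equivalence assertion is inherited verbatim from Berglund's non-equivariant result. Naturality in $X$ and $L$ is clear from the construction since $\varphi$ is built out of natural operations (pullback of forms and tensoring with $\id_L$).
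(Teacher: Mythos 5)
Your proposal is correct and takes essentially the same route as the paper, whose proof likewise consists of citing Berglund's construction from \cite[Theorem 6.6]{Ber15} as a black box and observing that the map $\varphi$ is $G$-equivariant once the source carries the diagonal action and the target the conjugation action. (One cosmetic point: $\varphi(\tau)(\alpha,\sigma)$ also requires multiplying the two resulting $\Omega_k$-factors together to land in $L\widehat{\otimes}\Omega_k$, but since $G$ does not act on the form factors this does not affect your equivariance check.)
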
 

\begin{proof}
Simply follow the proof in \cite{Ber15}, and see that the map $\varphi$ there is equivariant once we consider the functor $\MCG$  and endow the mapping space with the conjugation action. 
\end{proof}

It follows that the $G$-$L_\infty$-algebra $\Omega(X)\widehat{\otimes}L$ is a $G$-$L_\infty$-model of $\Map\left(X,\MCG(L)\right).$

\begin{theorem}\label{Models of homotopy fixed points}
	Let $X$ be a connected $G$-space of dimension $n$ and $Y$ be a $(n+1)$-connected $G$-space of finite $\Q$-type, for $G$ a finite group. If $A$ is a $G$-CDGA model of $X$  concentrated in degrees $0$ to $n$, and $L=L_{\geq n+1}$ is a $G$-$L_\infty$-model for $Y$, then there is a natural homotopy equivalence of Kan complexes 
	\begin{equation*}
	\MC\left(\left(A \otimes L\right)^G\right) \xrightarrow{\simeq}\Map\left(X,Y_\Q\right)^{hG},
	\end{equation*} 
	where the source is the Maurer-Cartan simplicial set of the $L_\infty$-algebra of $G$-invariant elements of $A\otimes L$, and the target is the homotopy fixed points of the simplicial mapping space $\Map\left(X,Y_\Q\right)$.
\end{theorem}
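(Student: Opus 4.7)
The plan is to string together the two main technical inputs of the section---Theorem \ref{Model of Map+Conjugation} (the equivariant refinement of Berglund's model for mapping spaces) and Theorem \ref{HomEq} (the rational generalized Sullivan conjecture for $\MC$)---together with Remark \ref{Remark1}. First, I set $M := A\otimes L$ and equip it with the diagonal $G$-action and the natural $L_\infty$-structure obtained by extending the brackets of $L$ by the CDGA-multiplication on $A$. Since $A$ is concentrated in (cohomological) degrees $0,\dots,n$ and $L=L_{\geq n+1}$, the $L_\infty$-algebra $M$ is concentrated in homological degrees $\geq 1$; in particular $M=M_{\geq 0}$, and the filtration inherited from $L$ makes $M$ into a complete $G$-$L_\infty$-algebra satisfying the hypotheses of Theorem \ref{HomEq}. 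Moreover, since $A$ is a $G$-CDGA model of $X$, there is a zig-zag of equivariant CDGA quasi-isomorphisms between $A$ and $\Omega(X)$; tensoring with $L$ and applying $\MCG$---which sends equivariant $L_\infty$-quasi-isomorphisms between complete $L_\infty$-algebras in non-negative degrees to equivariant homotopy equivalences---produces an equivariant homotopy equivalence $\MCG(M)\simeq \MCG(\Omega(X)\widehat{\otimes}L)$.

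Second, Theorem \ref{Model of Map+Conjugation} yields an equivariant homotopy equivalence
$$\MCG(\Omega(X)\widehat{\otimes}L) \xrightarrow{\simeq} \Map(X,\MCG(L)).$$
Using that $L$ is a $G$-$L_\infty$-model of $Y$, I identify $\MCG(L)$ equivariantly with $Y_\Q$; the dimension and connectivity assumptions ($\dim X=n$ and $Y$ being $(n+1)$-connected) guarantee that $\Map(X,Y_\Q)$ is connected and is the genuine rational mapping space, so no pathology arises in passing from $\MCG(L)$ to $Y_\Q$. Combined with the previous step, this gives an equivariant homotopy equivalence of $G$-Kan complexes
$$\MCG(M) \xrightarrow{\simeq} \Map(X,Y_\Q).$$

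Third, apply the derived fixed points functor $(-)^{hG}=\Map_G(EG,-)$ to both sides. Since $EG$ is a free (hence cofibrant) $G$-simplicial set and both sides are $G$-Kan complexes, this preserves the equivalence, giving $\MCG(M)^{hG}\simeq \Map(X,Y_\Q)^{hG}$. Theorem \ref{HomEq}, which applies because $M$ is complete and concentrated in non-negative degrees, then provides a homotopy equivalence $\MC(M)^G\hookrightarrow \MC(M)^{hG}$, and Remark \ref{Remark1} identifies $\MC(M)^G \cong \MC(M^G)=\MC((A\otimes L)^G)$. Composing the resulting chain
$$\MC\left((A\otimes L)^G\right) \;\cong\; \MC(M)^G \;\xrightarrow{\simeq}\; \MC(M)^{hG} \;\xrightarrow{\simeq}\; \Map(X,Y_\Q)^{hG}$$
produces the asserted equivalence. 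The main technical point---and the one requiring most care---is verifying that $\Map_G(EG,-)$ genuinely preserves the equivalence of Theorem \ref{Model of Map+Conjugation}, and that the zig-zag relating $A$ and $\Omega(X)$ remains coherent with the $G$-action at each stage; both reduce to the standard behavior of derived functors in the coarse equivariant (Borel) model structure on $G$-simplicial sets, once the degree and connectivity bookkeeping has been carried out.
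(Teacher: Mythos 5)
Your proof is correct and takes essentially the same route as the paper: the equivariant mapping-space model of Theorem \ref{Model of Map+Conjugation}, the fact that an equivariant weak equivalence of $G$-Kan complexes is preserved by $\Map_G(EG,-)$, and then Theorem \ref{HomEq} together with Remark \ref{Remark1}. The only difference is organizational: you apply the Sullivan-conjecture step to $A\otimes L$ after transporting the equivalence along the zig-zag between $A$ and $\Omega(X)$ at the simplicial-set level, whereas the paper applies it directly to $\Omega(X)\widehat{\otimes} L$ and leaves the comparison with $(A\otimes L)^G$ (via Lemma \ref{Lema1}) implicit.
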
 

\begin{proof} Assume the hypotheses of the statement. In this case, the map $\varphi$ in (\ref{mapa}) is a weak equivalence and a $G$-map, hence it induces a weak homotopy equivalence on the homotopy fixed points (\cite[Proposition 1.8]{Car92}). This, together with Remark \ref{Remark1} and Theorem \ref{HomEq} gives the following sequence of weak equivalences:
	\begin{equation*}
	 \MC\left(\left(\Omega\left(X\right)\widehat{\otimes} L\right)^{G}\right) =\MCG\left(\Omega\left(X\right)\widehat{\otimes} L\right)^{G} \hookrightarrow \MCG\left(\Omega\left(X\right)\widehat{\otimes} L\right)^{hG} \xrightarrow{\varphi^{hG}} \Map\left(X,\MCG\left(L\right)\right)^{hG},
	\end{equation*}
	which finishes the proof.
\end{proof}

It follows that the $L_\infty$-algebra of $G$-invariants $\left(A\widehat{\otimes}L\right)^G$ is an $L_\infty$-model for $\Map\left(X,Y_\Q\right)^{hG}.$ \\

The following result, used for proving Corollary \ref{Homotopy Groups}, is written for clarity of exposition here. Recall that the tensor product $A\otimes B$ of $G$-complexes carries the diagonal action. 

\begin{lemma}\label{Lema1}
	Let $G$ be a finite group. If $f:A\to B$ is an equivariant CDGA-quasi-isomorphism and  $L$ is a $G$-$L_\infty$-algebra, then:
	\begin{enumerate}
		\item The map $h=f\otimes \id :A\otimes L \to B\otimes L$ is a $G$-$L_\infty$-morphism.
		\item The inclusion of $G$-invariants $\left(A\otimes L\right)^G\hookrightarrow A\otimes L$ induces an isomorphism $$H_* \left(\left(A\otimes L\right)^G\right)\cong \big(H_*(A\otimes L)\big)^G.$$
		\item The map $h$ restricts to an $L_\infty$-quasi-isomorphism between the $G$-invariants $$h^G:\left(A\otimes L\right)^G \xrightarrow{\simeq} \left(B\otimes L\right)^G.$$
	\end{enumerate}
\end{lemma}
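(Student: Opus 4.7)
For part \textbf{(1)}, we endow $A\otimes L$ with its canonical $L_\infty$-structure coming from the CDGA structure on $A$: the linear bracket is $\ell_1=d_A\otimes \id + \id\otimes \ell_1^L$, and for $k\geq 2$,
\begin{equation*}
\ell_k(a_1\otimes x_1,\ldots,a_k\otimes x_k)\;=\;\pm\,(a_1\cdots a_k)\otimes \ell_k^L(x_1,\ldots,x_k),
\end{equation*}
with signs dictated by the Koszul convention. Because $f$ commutes with products and with differentials, a direct term-by-term comparison shows that $h=f\otimes \id$ is a \emph{strict} $L_\infty$-morphism. Equivariance under the diagonal $G$-action follows at once from the equivariance of $f$ and $\id$.

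For part \textbf{(2)}, the key input is Maschke's theorem: since $|G|$ is invertible in $\Q$, the averaging projector $p(x)=\frac{1}{|G|}\sum_{g\in G}gx$ realizes $M^G$ as a $\Q[G]$-summand of any $\Q[G]$-module $M$, so the functor $(-)^G$ is exact on $\Q[G]$-modules. Applying exactness of $(-)^G$ to the short exact sequences of $G$-equivariant chain complexes
\begin{equation*}
0\to \ker d\to A\otimes L\xrightarrow{d}\operatorname{im} d\to 0,\qquad 0\to \operatorname{im} d\to \ker d\to H_*(A\otimes L)\to 0,
\end{equation*}
identifies $H_*((A\otimes L)^G)=(\ker d)^G/(\operatorname{im} d)^G$ with $(\ker d/\operatorname{im} d)^G=H_*(A\otimes L)^G$, and this isomorphism is plainly the map induced by the inclusion.

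For part \textbf{(3)}, by parts \textbf{(1)} and the $G$-equivariance of $h$, the map restricts to a strict $L_\infty$-morphism $h^G:(A\otimes L)^G\to (B\otimes L)^G$ whose linear part is $(f\otimes\id)^G$. Since $f$ is a quasi-isomorphism of $\Q$-complexes, the Künneth theorem over the field $\Q$ shows that $f\otimes\id$ is also a quasi-isomorphism, and by construction it is $G$-equivariant. Applying the exact functor $(-)^G$ to the induced $G$-equivariant isomorphism $H_*(f\otimes\id)$ yields an isomorphism $H_*(A\otimes L)^G\xrightarrow{\cong}H_*(B\otimes L)^G$, which by part \textbf{(2)} is $H_*(h^G)$. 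Hence $h^G$ is an $L_\infty$-quasi-isomorphism.

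The only nontrivial technical step is part \textbf{(2)}, where one must carefully commute $(-)^G$ with the taking of homology; everything else is essentially formal given the characteristic-zero hypothesis together with the standard tensor-product constructions for CDGAs and $L_\infty$-algebras.
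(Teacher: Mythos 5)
Your proof is correct, and it is essentially the argument the paper has in mind: the paper leaves the lemma to the reader, remarking that parts (2) and (3) are an easy adaptation of a standard result in characteristic zero, which is precisely the Maschke/averaging exactness of $(-)^G$ you use, while part (1) is the routine check that $f\otimes\id$ is a strict equivariant $L_\infty$-morphism. No gaps; your use of the averaging idempotent to commute invariants with homology and the field-coefficient K\"unneth argument for $f\otimes\id$ supply exactly the details the paper omits.
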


\begin{proof}
	Left to the reader. The proofs of 2. and 3. are an easy adaptation of \cite[Theorem 3.28]{Yve08}.
\end{proof}


\section{Rational homotopy cooperations on $n$-fold suspensions}\label{Cooperations}

In this last section, we combine the little $n$-disks coalgebra structure on $n$-fold suspensions with the results on equivariant rational homotopy theory to construct cooperations on the homotopy groups of the $n$-fold suspension of a rational space.  Recall that for any rational space $X$, its $n$-fold suspension $\Sigma^nX$ is also rational, since their reduced cohomologies are isomorphic up to a shift in degree. All algebraic invariants like homology and homotopy will be taken with rational coefficients.

\subsection{General cooperations on the $n$-fold suspension of a rational space}\label{General Cooperations}

 Let $\Sigma^nX$ be the suspension of a rational space. Then,  there is a sequence of maps (Remark \ref{Sequence of maps})
 \begin{equation*}
 	\Delta_r:\Sigma^n X \rightarrow \Map\left(\mathcal{D}_n(r),\left( \Sigma^n X\right)^{\vee r}\right)^{\S_r},
 \end{equation*} given by the little $n$-disks coalgebra structure on $\Sigma^n X$. Our next goal is to study the maps induced in homotopy groups by the maps $\Delta_r$. Since the computation of the homotopy groups of the fixed points is too difficult, we replace them by the homotopy fixed points and study instead the operations induced by the compositions
  \begin{equation}\label{Suspensions sequence of maps}
 \Delta_r:\Sigma^n X \rightarrow \Map\left(\mathcal{D}_n(r),\left( \Sigma^n X\right)^{\vee r}\right)^{\S_r} \hookrightarrow \Map\left(\mathcal{D}_n(r),\left( \Sigma^n X\right)^{\vee r}\right)^{h\S_r}.
  \end{equation} These operations have the advantage that when the connectivity of $X$ is high enough, we can apply Theorem \ref{Models of homotopy fixed points} for computing the homotopy groups of the homotopy fixed points of the corresponding mapping space. Indeed: Since the little disks operad is rationally formal (\cite{San05}) and a wedge of suspensions is coformal (\cite[Thm. 24.5]{Yve12}), whenever $X$ is connected $n$-dimensional, the homotopy groups are explicitly given by
  \begin{equation}\label{Homotopy groups}
  \pi_{*}\left(\Map\left(\mathcal{D}_n(r),\left( \Sigma^n X\right)^{\vee r}\right)^{h\S_r}\right)=\left( H^*\left(\mathcal D_n\left(r\right)\right) \otimes \left(\pi_*\left(\Sigma^n X\right) \right)^{*r}\ \right)^{\S_r}.
  \end{equation} In the equality above, we used that the $L_\infty$-algebra given by the $G$-invariants of $A\otimes L$, for $A=H^*\left(\mathcal D_n\left(r\right)\right)$ the CDGA model of $X$ and $L=\left(\pi_*\left(\Sigma^n X\right) \right)^{*r}$ the $L_\infty$-model of $\Sigma^nX$, has trivial differential, since the differentials of $A$ and $L$ vanish. The superscript $*r$ denotes the $r$-fold free product (or categorical coproduct) of graded Lie algebras. Thus, the maps in (\ref{Suspensions sequence of maps}) induce the following maps on homotopy groups, denoted in the same way:
	\begin{equation*}
		 \Delta_r:\pi_*\left(\Sigma^nX\right) \rightarrow \left( H^*\left(\mathcal D_n\left(r\right)\right) \otimes \left(\pi_{*}\left(\Sigma^n X\right) \right)^{*r}\ \right)^{\S_r}.
	\end{equation*}  Each $\Delta_r$ maps a homotopy class $x\in \pi_{*}\left(\Sigma^nX\right)$ to an $S_r$-invariant of the sort 
\begin{equation*}
	\Delta_r(x)= \sum_{i,j} \Phi_i\otimes w_j,\ \qquad \Phi_i\in H^*\left(\mathcal D_n(r)\right), \ \ w_j\in \left(\pi_ *\left(\Sigma^nX\right)\right)^{*r}.
\end{equation*} The class $x$ then induces a map $\Delta_r(x):H_*\left(\mathcal D_n\left(r\right)\right)\to \left(\pi_ *\left(\Sigma^nX\right)\right)^{*r}$ explicitly given by
\begin{equation*}
	\lambda \in H_*\left(\mathcal D_n\left(r\right)\right) \mapsto \Delta_r\left(x\right)(\lambda):= \sum_{i,j} \langle \Phi_i;\lambda\rangle \cdot w_j,
\end{equation*} where $\langle -,-\rangle$ denotes the natural homology-cohomology pairing of the space $\mathcal D_n(r)$. That is, each homology class $\lambda \in H_k\left(\mathcal{D}_n\left(r\right)\right)$ defines the degree $k$ cooperation
\begin{equation*}
	\Delta_r(-)(\lambda):\pi_*\left(\Sigma^n X\right) \rightarrow \left(\pi_*\left(\Sigma^n X\right) \right)^{* r}.
\end{equation*} 
 So, assuming that the connectivity of $X$ is high enough so that the space $\Map\left(\mathcal{D}_n(r), \left( \Sigma^n X\right)^{\vee r}\right)$ is connected, there is an induced cooperation on the homotopy groups of $\Sigma^n X$.\\

 It further follows that all these operations inherit, via the action described in this paper, the relations that the algebraic invariants of the little disks operad satisfy. For example, if the connectivity is high enough, then the binary operation arising from the degree $(n-1)$ homology class of $\mathcal{D}_n(2)$ is graded anti-symmetric and satisfies  a version of the dual Jacobi identity in the category of graded Lie algebras with the free product. This is because the binary operation in the little disks operad satisfies this relation.
 
 Remarkably, it turns out that in the rational case, the homotopy operations on $n$-fold suspensions described are not determined by the homotopy groups of the little disks operad, but rather by its (co)ho\-mo\-lo\-gy.

\subsection{The rational homotopy  Browder cooperation}

For almost all $r$, the mapping space $ \Map\left(\mathcal{D}_n(r),\left( \Sigma^n X\right)^{\vee r}\right)$ is not connected. The computation of the rational homotopy groups of such spaces in full generality is a hard open problem. Fortunately, for $r=2$ these mapping spaces are connected. Indeed, $\mathcal{D}_n(2)$ is equivariantly homotopic to $S^{n-1}$ with the antipodal action, hence it is $(n-1)$-dimensional. Since the $n$-fold suspension of a space is at least $n$-connected,  the mapping space encoding the binary cooperations are connected. Moreover, the hypotheses of Theorem \ref{Models of homotopy fixed points} are satisfied. Therefore, every rational $n$-fold suspension admits a binary cooperation which is Eckmann-Hilton dual to the Browder bracket on $n$-fold loop spaces.

\begin{theorem}\label{HomotopyBrowder}
Let $\Sigma^n X$ be the $n$-fold reduced suspension of a rational space, and denote by $*$ the free product of graded Lie algebras. Then the binary part of the $\mathcal{D}_n$-coalgebra structure induces a degree $(n-1)$ cooperation
\begin{equation*}
\kappa:\pi_*\left(\Sigma^n X\right) \rightarrow \left( \pi_*\left(\Sigma^n X\right)\ *\ \pi_*\left(\Sigma^n X\right) \right)_{*+n-1}	
\end{equation*} called the \emph{rational homotopy Browder cooperation}. If moreover $\Sigma^nX$ is an $(n+1)$-fold suspension, then the cooperation $\kappa$ vanishes.
\end{theorem}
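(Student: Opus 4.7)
The plan is to specialize the general framework of Section 4.1 to $r=2$, where the hypotheses of Theorem \ref{Models of homotopy fixed points} are genuinely satisfied. Since $\mathcal{D}_n(2)$ is $\S_2$-equivariantly homotopy equivalent to $S^{n-1}$ with the antipodal action, $\mathcal{D}_n(2)$ has dimension $n-1$, while $(\Sigma^n X)^{\vee 2}$ is at least $n$-connected (assuming $X$ is connected). The little disks operad is rationally formal and a wedge of suspensions is coformal, so by Theorem \ref{Models of homotopy fixed points} we obtain
\begin{equation*}
\pi_*\left(\Map\left(\mathcal{D}_n(2), (\Sigma^n X)^{\vee 2}\right)^{h\S_2}\right) \cong \left(H^*\left(\mathcal{D}_n(2)\right) \otimes \left(\pi_*(\Sigma^n X)\right)^{*2}\right)^{\S_2}.
\end{equation*}
Since $H^*(\mathcal{D}_n(2)) \cong H^*(S^{n-1})$ is concentrated in degrees $0$ and $n-1$, the image under $(\Delta_2)_*$ of a homotopy class $x \in \pi_q(\Sigma^n X)$ decomposes into exactly two $\S_2$-invariant tensor components. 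The cooperation $\kappa$ is then defined as the projection of $(\Delta_2)_*(x)$ onto the component indexed by the fundamental cohomology class $[S^{n-1}]^\vee$, equivalently by pairing with the fundamental homology class as in Section 4.1. The degree shift by $n-1$ is immediate from the degree of this class.

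For the vanishing statement, assume that $\Sigma^n X \simeq \Sigma^{n+1} Y$ for some space $Y$. The key step is to verify that the $\mathcal{D}_n$-coalgebra structure $\nabla$ on $\Sigma^n X$ arising from Theorem \ref{Main} (viewed as an $n$-fold suspension) agrees, up to the natural identification, with the restriction of the $\mathcal{D}_{n+1}$-coalgebra structure $\nabla'$ on $\Sigma^{n+1} Y$ along the operad inclusion $\iota: \mathcal{D}_n \hookrightarrow \mathcal{D}_{n+1}$. This reduces to the corresponding statement for spheres: the composition
\begin{equation*}
\mathcal{D}_n(r) \xrightarrow{\iota} \mathcal{D}_{n+1}(r) \xrightarrow{\nabla_r^{S^{n+1}}} \coend(S^{n+1})(r)
\end{equation*}
coincides, under the identification $S^{n+1} \cong S^n \wedge S^1$ and using the standard stabilization $\iota(f) = f \times \id_{D^1}$, with the coaction obtained by smashing $\nabla_r^{S^n}$ with $\id_{S^1}$. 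Unwinding the collapse-map definition of Proposition \ref{Spheres} confirms this: both maps collapse the complement of the little disks, and $\iota$ precisely inserts a trivial $D^1$-factor that passes through the smash.

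Once this compatibility is established, naturality of the map $\Delta_2$ with respect to operad morphisms gives a factorization
\begin{equation*}
(\Delta_2^n)_*(x) = (\iota^* \otimes \id)\circ (\Delta_2^{n+1})_*(x) \ \in \ \left(H^*(\mathcal{D}_n(2)) \otimes (\pi_*\Sigma^n X)^{*2}\right)^{\S_2},
\end{equation*}
where $\iota^*: H^*(\mathcal{D}_{n+1}(2)) \to H^*(\mathcal{D}_n(2))$ is the pullback induced by $\iota$. Since $\mathcal{D}_{n+1}(2) \simeq S^n$, we have $H^{n-1}(\mathcal{D}_{n+1}(2)) = 0$ for $n \geq 2$, so no element of $H^{n-1}(\mathcal{D}_n(2))$ lies in the image of $\iota^*$. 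Thus the top-class component of $(\Delta_2^n)_*(x)$ vanishes, which is precisely the statement $\kappa(x) = 0$.

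The main obstacle I anticipate is the compatibility check in the second paragraph, namely that the two ways of endowing $\Sigma^{n+1} Y$ with a $\mathcal{D}_n$-coalgebra structure (directly as an $n$-fold suspension versus by restricting the $\mathcal{D}_{n+1}$-structure) genuinely agree. While this is essentially a diagram chase through the construction of Proposition \ref{Spheres} and the distributivity argument in the proof of Theorem \ref{Main}, it requires a careful choice of the stabilization $\iota: \mathcal{D}_n \hookrightarrow \mathcal{D}_{n+1}$ and a compatible identification $S^{n+1} \cong \Sigma S^n$. Once these are fixed, the verification is a matter of checking that collapsing to the base point commutes with the suspension coordinate, which holds essentially tautologically.
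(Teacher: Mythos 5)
Your proposal is correct and follows essentially the same route as the paper: existence by specializing the Section \ref{General Cooperations} discussion to $r=2$ via Theorem \ref{Models of homotopy fixed points} and pairing with the fundamental class of $H_*\left(\mathcal{D}_n(2)\right)$, and vanishing because that class dies under $\mathcal{D}_n(2)\hookrightarrow \mathcal{D}_{n+1}(2)$ (the paper phrases this as a homology pushforward rather than your dual pullback $\iota^*$, and leaves implicit the suspension/stabilization compatibility that you verify explicitly). The only caveat is your restriction to $n\geq 2$ in the cohomological vanishing step; for $n=1$ one should use the reduced (difference) class in $H_0\left(\mathcal{D}_1(2)\right)\cong H_0\left(S^0\right)$, which likewise vanishes when pushed to the connected space $\mathcal{D}_2(2)$.
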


\begin{proof}
 The discussion in Section \ref{General Cooperations}, taking into account that the hypotheses of Theorem \ref{Models of homotopy fixed points} are satisfied, proves the existence of such an operation $\kappa$ when $r=2$. It is induced by fixing $\lambda$ to be the fundamental class of $H_*\left(\mathcal{D}_n\left(2\right)\right)$. For the vanishing condition when $\Sigma^nX$ is an $(n+1)$-fold suspension, note that if a $\mathcal D_n$-coalgebra structure factors through a $\mathcal D_{n+1}$-coalgebra structure, then the fundamental class of $H_*\left(\mathcal{D}_n\left(2\right)\right)$ vanishes when pushed forward to $H_*\left(\mathcal{D}_{n+1}\left(2\right)\right)$. 
\end{proof}

In the next example we explicitly describe the homotopy Browder cooperation for spheres.

\begin{example}\label{Cellular description} To compute the homotopy Browder cooperation for spheres we first need a cellular description of the map $\nabla$. In what follows, we use the fact that $\mathcal{D}_n(2)$ is equivariantly homotopic to $S^{n-1}$ with the antipodal action. We give next a cellular description of $\nabla$ in the particular case $$\nabla: S^{n-1}\times S^n \to S^n\vee S^n.$$ Fix the cellular decomposition 
	\begin{equation*}
	S^{n-1} \times S^n = * \cup \left(e^{n-1}\times *\right) \cup \left(* \times e^{n}\right) \cup \left(e^{n-1}\times e^n\right) = \left(S^{n-1} \vee S^{n}\right) \cup_w e^{2n-1}.
	\end{equation*} Here, $w$ is the attaching map of the top cell of the product $S^{n-1} \times S^n$, corresponding to the usual Whitehead product. Define $\nabla: S^{n-1}\times S^n \to S^n\vee S^n$ cellularly as follows: 
	\begin{itemize}
		\item $\left.\nabla\right|_{e^{n-1}\times *}: D^{n-1} \to S^n \vee S^n \textrm{ is constantly the base point,}$
		\item $\left.\nabla\right|_{*\times e^{n}}: D^{n} \to S^n \vee S^n \textrm{ is the pinch map}$, so that the following triangle commutes: \\ 
		\begin{center}
			\begin{tikzcd}
			D^n \arrow[rr, "\left.\nabla\right|_{*\times e^{n}}"] \arrow[rd, "q"', two heads] &                                                          & S^n\vee S^n \\
			& D^n/\partial D^n \cong S^n \arrow[ru, "\nabla_{pinch}"'] &            
			\end{tikzcd}	
		\end{center}
		\item $\left.\nabla\right|_{e^{n-1}\times e^{n}}: D^{n-1}\times D^n \to S^n \vee S^n$  is the restriction to its image of the composition $$D^{n-1}\times D^n \hookrightarrow D^n\times D^n \twoheadrightarrow D^n/\partial D^n\times D^n/\partial D^n \cong S^n\times S^n.$$ The image is precisely $S^n \vee S^n$ because $D^{n-1}$ sits into the boundary of $D^n.$
	\end{itemize} So defined, $\nabla$ is continuous and matches the description given in Proposition \ref{Spheres}. Moreover, the following triangle commutes:
	\begin{center}
		\begin{tikzcd}
		S^{n-1}\times S^n \arrow[r, "\nabla"] \arrow[d, "p"', two heads] & S^n\vee S^n \\
		S^{2n-1} \arrow[ru, "w"']                                        &            
		\end{tikzcd}
	\end{center} In it, $p$ is the collapse of the $n$-skeleton of $S^{n-1}\times S^n$, and $w$ is the Whitehead product map that corresponds to the attaching map for the top cell of $S^n\times S^n$. From this, it follows that the homotopy Browder cooperation 
\begin{equation*}
	\kappa_n:\pi_*\left(S^n\right)\rightarrow\left( \pi_*\left(S^n\right)* \pi_*\left(S^n\right) \right)_{*-n+1}
\end{equation*} is given by
\[
\kappa_n\left(x\right)=u \otimes v,	
	\]
	and zero otherwise. Here, $x\in \pi_n\left(S^n\right)$ is a generator of the $n$th homotopy group of $S^n$, and $u$ and $v$ are also two degree $n$ generators of the $n$th homotopy group of $S^n \vee S^n$. 
\end{example}

The ideas developed in this paper improve on classical rational homotopy theory. Indeed, it is well known that simply-connected suspensions are rationally equivalent to  wedges of spheres (\cite[Theorem 24.5]{Yve12}). Thus  we cannot distinguish by using minimal models, say for example, $S^5\vee S^7$ from $\Sigma^3 \C P^2$, since both space are coformal and have isomorphic free graded rational homotopy Lie algebra on two generators  one of degree $4$ and one of degree $6$.  The next example shows that rational homotopy invariants \emph{do} see the difference between different $n$-fold suspensions, even when the two spaces share the same rational homotopy type, when equipping the rational homotopy groups with this new operation. It also follows that the homotopy Browder cooperation is not a rational invariant, because it is not invariant under rational homotopy equivalences.

\begin{example}\label{Example}  \emph{The spaces $S^5\vee S^7$ and $\Sigma^3 \C P^2$ have the same rational homotopy type but different rational homotopy Browder cooperations.}
	
\begin{proof}	Since the wedge $S^5\vee S^7 = S^5 \wedge \left(S^0\vee S^2\right) = \Sigma^5\left(S^0\vee S^2\right)$ is a $5$-fold suspension, Theorem \ref{HomotopyBrowder} implies that the operation $\kappa_3$ vanishes for $S^5 \vee S^7$. We prove next that $\kappa_3$ is non-trivial for $\Sigma^3\C P^2.$ 	Recall that cellularly, $$\Sigma^3 \C P^2=* \cup e^5 \cup_{\Sigma^3\eta}e^7,$$ where the attaching map $\Sigma^3\eta$ is non-trivial (it is in fact a generator of $\pi_6\left(S^5\right)$). The operation $\kappa=\kappa_3$ is induced by the adjoint of the $S_2$-equivariant map 
	\begin{equation}\label{aplica}
	\nabla: \mathcal D_3(2)\times \Sigma^3 \C P^2 \to \Sigma^3 \C P^2 \vee \Sigma^3 \C P^2
	\end{equation} described in Section \ref{Suspensions are coalgebras}. We give next a Quillen DGL model for $\nabla$ which carries a natural $\S_2$-action inherited by the coalgebra structure. In terms of the cellular decompositions of the source and target spaces of $\nabla$, the DGL model of $\nabla$ is of the form
		\begin{equation}\label{LieModel}
	\Lie\left(x,u,v,a,b\right) \to \Lie\left(u_1,v_1,u_2,v_2\right).
	\end{equation} We explain next the details of the DGL map above. First, equivariantly identify $\mathcal D_3(2)$ with $S^2$ provided with the antipodal action, and denote by $x$ the unique $2$-cell of $S^2$ for its standard cellular decomposition. Then  $S^2\times \Sigma^3 \C P^2 \simeq \mathcal D_3(2)\times \Sigma^3 \C P^2$ carries the diagonal action for the trivial $\S_2$-action on  $\Sigma^3 \C P^2$.  The  $p$-cells in the product $S^2\times \Sigma^3 \C P^2$  are identified with the Lie generators  of degree $(p-1)$ as follows:
	\begin{equation*}
		x\times * = x, \quad * \times e^5 = u, \quad *\times e^7 = v, \quad x\times e^5= a, \quad x\times e^7=b.
	\end{equation*} Thus, the generators $x,u,v,a,b$ have degree $1,4,6,6,8$, respectively. Similarly, the obvious cellular decomposition for $\Sigma^3 \C P^2 \vee \Sigma^3 \C P^2$ (recall that $\S_2$ permutes the wedge factors  here) gives Lie generators of the target, which are $u_1,v_1,u_2,v_2$ of corresponding degrees $4,6,4,6$. Both differentials are trivial, since the involved spaces are formal. The natural $S_2$-action on the Lie models is explicitly given as follows. Denote by $\sigma$ the non-trivial transposition of $S_2$. Then,
	\begin{equation*}
		\sigma \cdot x = -x, \qquad 		\sigma \cdot u = u, \qquad 		\sigma \cdot v = v, \qquad		\sigma \cdot a = -a, \qquad 		\sigma \cdot b = -b, 	
	\end{equation*} 
	\begin{equation*}
	\qquad  		\sigma \cdot u_i = u_j, \qquad 		\sigma \cdot v_i = v_j, \qquad \textrm{ for } i\neq j.
	\end{equation*}	The explicit description of $\nabla$ given in Section \ref{Suspensions are coalgebras}, see also Example \ref{Cellular description},  produces the mapping 
	\begin{equation*}
		x,a  \mapsto 0, \qquad
		u  \mapsto u_1 + u_2, \qquad 
		v  \mapsto v_1 + v_2, \qquad
		b  \mapsto [u_1,u_2].
	\end{equation*}	
	
	Indeed, denote by $e'^5$ and $e'^7$ the cells of the second copy of $\Sigma^3\C P^2$ in the wedge factor. The map $\nabla$ can be chosen to act cellularly by
	\begin{equation*}
	\nabla(x) = *, \quad \nabla(u)=\nabla(a)=e^5 \cup e'^5, \quad \nabla(v)=\nabla(b)=e^7\cup e'^7.
	\end{equation*} Although $\nabla$ surjects the 5-cell $e^5 \cup e'^5$ with both $u$ and $a$, note that $\nabla$ coincides with the usual pinch map on $u$, while on $a$, the map $\nabla$ ranges the target cell depending on each point of the $2$-cell $x$ of $a=x\times e^5$. Similarly for $\nabla$ acting on $v,b$.  Since the differentials in the Lie models vanish, the map induced in homology by the DGL map (\ref{LieModel}) is the map itself. Up to a suspension, it corresponds to a map in homotopy groups
	\begin{equation*}
		\pi_*\left(\mathcal D_3(2)\right) \times \pi_*\left(\Sigma^3 \C P^2\right) = \pi_*\left(\mathcal D_3(2)\times \Sigma^3 \C P^2\right) \to \pi_* \left(\Sigma^3 \C P^2 \right)*\pi_* \left(\Sigma^3 \C P^2 \right). 
	\end{equation*} Write the adjoint of this map in homotopy groups as in Theorem \ref{HomotopyBrowder}, taking into account the equality (\ref{Homotopy groups}),  to explicitly obtain:
	\begin{equation*}
		\kappa_3 : \pi_*\left(\Sigma^3 \C P^2\right) \to \left(H^*\left(S^2\right)\otimes \pi_* \left(\Sigma^3 \C P^2\right)^{* 2}\right)^{S_ 2}
	\end{equation*}	Now write $u$ and $v$ for the Lie generators corresponding to the (homology class of the) $5$ and $7$-cell of $\Sigma^3 \C P^2$, respectively. Then the the map becomes
	\begin{align*}
			\kappa_3: \Lie\left(u,v\right) &\to \big(\left\langle 1,x \right\rangle\otimes \Lie\left(u_1,v_1,u_2,v_2\right)\big)^{S_2}\\[0.15cm]
		u \ &\ \mapsto \ u_1 + u_2\\
		v \ &\ \mapsto \ x\otimes [u_1,u_2] + 1\otimes (v_1+v_2)
	\end{align*}
	
The rational homotopy Browder cooperation $\kappa_3$ is therefore non-zero, which implies that $\Sigma ^3 \C P^2$ is not a $5$-fold suspension and can therefore not be  homotopic to $S^5 \vee S^7$.
	
\end{proof}	 
\end{example}

\bibliographystyle{plain}
\bibliography{MyBib}

\noindent\sc{José Manuel Moreno Fernández}\\ 
\noindent\sc{School of Mathematics, Trinity College Dublin \\ Dublin 2, Ireland}\\
\noindent\tt{morenofdezjm@gmail.com}\\

\bigskip

\noindent\sc{Felix Wierstra}\\ 
\noindent\sc{Department of Mathematics, Stockholm University \\ SE-106 91 Stockholm, Sweden}\\
\noindent\tt{felix.wierstra@gmail.com}

\end{document}